\def\B{{\mathcal{B}}}
\def\E{{\mathbb{E}}}
\def\H{{\mathcal{H}}}
\def\N{{\mathbb{N}}}
\def\P{{\mathbb{P}}}
\def\Q{{\mathbb{Q}}}
\def\R{{\mathbb{R}}}
\newcommand{\F}{{\mathcal F}}
\newcommand{\8}{\infty}
\renewcommand{\d}{\delta}
\renewcommand{\a}{\alpha}
\renewcommand{\b}{\beta}
\newcommand{\D}{\Delta}
\renewcommand{\O}{\Omega}
\renewcommand{\l}{\lambda}
\newcommand{\eps}{\varepsilon}
\newcommand{\z}{\zeta}
\newcommand{\s}{\sigma}
\newcommand{\ov}{\overline}
\newcommand{\wt}{\widetilde}
\newcommand{\1}{{\bf 1}}
\newcommand{\wh}{\widehat}
\renewcommand{\o}{\omega}
\newcommand{\supp}{\mathrm{supp}}
\newcommand{\is}[2]{\langle #1,#2\rangle}
\newcommand{\bis}[2]{\bigg\langle #1,#2\bigg\rangle}
\newcommand{\en}{ \E\big[ N \big]}
\newcommand{\Ps}{P^s}
\newcommand{\Qs}{Q^s}
\newcommand{\Qxs}{\Q_x^s}
\newcommand{\Pst}{P_*^s}
\newcommand{\es}{e^s}
\newcommand{\est}{e_*^s}
\newcommand{\nus}{\nu^s}
\newcommand{\nust}{\nu_*^s}
\newcommand{\skalar}[1]{\langle #1 \rangle}
\newcommand{\abs}[1]{\left| #1 \right|}
\newcommand{\norm}[1]{\left\| #1 \right\|}
\newcommand{\Erw}[2][]{\E_{#1} \left[ #2 \right]}
\newcommand{\red}{\color{black}}
\newtheorem{thm}[equation]{Theorem}
\newtheorem{cor}[equation]{Corollary}
\newtheorem{lem}[equation]{Lemma}
\newtheorem{lemma}[equation]{Lemma}
\newtheorem{prop}[equation]{Proposition}
\theoremstyle{definition}
\newtheorem{rem}[equation]{Remark}
\newtheorem{ex}[equation]{Example}
\numberwithin{equation}{section}
\begin{document}

\title{On  multidimensional  Mandelbrot cascades}
\author[D. Buraczewski, E. Damek, Y. Guivarc'h, S. Mentemeier]
{Dariusz Buraczewski, Ewa Damek, Yves Guivarc'h, Sebastian Mentemeier}
\address{D. Buraczewski, E. Damek, S. Mentemeier\\
Uniwersytet Wroclawski\\ Instytut Matematyczny\\
 50-384 Wroclaw\\pl. Grunwaldzki 2/4\\
 Po\-land}
\email{\{dbura, edamek, mente\}@math.uni.wroc.pl}
\address{Y. Guivarc'h\\ IRMAR, Universit\'e de Rennes 1\\
Campus de Beaulieu\\
35042 Rennes cedex, France} \email{yves.guivarch@univ-rennes1.fr}

\thanks{
D.~Buraczewski and E.~Damek were partially supported by NCN
grant DEC-2012/05/B/ST1/00692.}

\begin{abstract}
Let $Z$ be a random variable with values in a proper closed convex
cone $C\subset \R^d$, $A$ a random endomorphism of $C$ and $N$ a random integer. We assume that $Z$, $A$, $N$ are independent. Given $N$ independent copies $(A_i,Z_i)$ of $(A,Z)$ we define a new random variable $\wh Z = \sum_{i=1}^N A_i
Z_i$. Let $T$ be the corresponding transformation on the set of
probability measures on $C$ i.e. $T$ maps the law of $Z$ to the law of $\wh Z$. If the matrix $\E[N] \E [A]$ has dominant eigenvalue 1, we study existence and properties of fixed points of $T$ having finite nonzero expectation. Existing one dimensional results concerning T are
extended to higher dimensions. In particular we give conditions under which such fixed points of $T$ have multidimensional regular variation in the sense of extreme value theory 
and we  determine the index of regular variation.
\end{abstract}

\keywords{Mandelbrot's cascade, products of random matrices, renewal theorem, fixed points, asymptotic behavior}

\maketitle

\section{Introduction}
\label{sec: introduction}

\subsection{The smoothing transform}
We consider the vector space $V=\R^d$ endowed with a scalar
product $\is xy$ and the corresponding norm $x\to|x|$. We equip the space of endomorphisms of $V$, ${\rm End}(V)$ with the associated operator norm $\norm{a}:= \sup_{\abs{x}=1} \abs{ax}$. 
Let $\mu$ be a probability measure on ${\rm End}(V)$, i.e. $\mu \in M^1({\rm End}(V))$. Suppose that $A$ is a random endomorphism distributed according to $\mu$. Let $N$ be
a random integer and $Z$ a $V$-valued random vector
such that $A$, $N$ and $Z$ are independent.
We consider $N$ independent copies $(A_i,Z_i)$ ($1\le i\le N$) of $(A,Z)$
and the new random variable $\wh Z$ defined by
\begin{equation}
\label{iteration}
\wh Z= \sum_{i=1}^N A_i Z_i.
\end{equation}
Thus we obtain a transformation of $M^1(V)$ defined by $\rho\to T\rho$
where $\rho$ is the law of $Z$ and $T\rho$ the law of $\wh Z$.

Nontrivial fixed points of $T$ ($\rho\not=\d_0$) and their tails have been of considerable interest. As we shall see below, under natural
conditions, there exists a non trivial fixed point. Heuristically, this
corresponds to the competing effects of expansion by summation ($N>1$) and contraction
by endomorphism $A_i$. Furthermore, if $A$ is also expanding with positive probability, there exists $\chi > 1$, such that for $s \ge \chi$, the $s$-moment of the fixed point (which will be proven to be essentially unique) of equation \eqref{iteration} is infinite, in particular, it has heavy tails.

For $d=1$ and $A_i>0$, fixed points of $T$ were considered by  Durrett and Liggett \cite{DL},  Holley \cite{Ho}, Spitzer \cite{S}, who  studied invariant measures of infinite systems of particles in interaction.
Independently, in the context of random fractals, various questions on equation \eqref{iteration} were considered by Mandelbrot \cite{M} and some of them were solved by Kahane and Peyri\`ere \cite{KP}.
For the most general contributions see Alsmeyer, Biggins, Meiners \cite{ABM2012}, Biggins, Kyprianou \cite{BK} and Liu \cite{L}.
If $N$ is constant and $N\E A=1$, solutions of the fixed point
equation with finite mean play an important role in the context
of construction of a large class of self-similar random measures
\cite{GMW,L0}.  Heavy tail properties of the the fixed points were studied  by Guivarc'h \cite{G}, Jelenkovic, Olvera-Cravioto \cite{JO}, Liu \cite{L} and R\"osler, Topchii, Vatutin \cite{RTV}. Finally, equation \eqref{iteration} appeared also in the context of branching random walks; see Biggins \cite{Bi}.

In the one-dimensional situation existence of solutions was  discussed by Kahane, Peyri\`ere
\cite{KP} in the case when $N$ is a constant and $\E A =1/N$. For very general results, in particular also concerned with uniqueness, see \cite{ABM2012, AMeiners, DL, L}.
The behavior of the tails of fixed points depends on the properties
of the function $\theta(s) = \E\big[ \sum_{i=1}^N
A_i^{s}\big]$ (here $A_i$ and $N$ can be dependent), see
\cite{DL}. Then if $\theta(1)=1$, $\theta'(1)<0$ and $\theta(\chi)=1$ for some
$\chi>1$, Guivarc'h \cite{G} and Liu \cite{L} proved that if ${\rm
supp\ \mu}$ is non arithmetic, then the fixed points have heavy tails, i.e. if the law of $Y$ is a fixed point of $T$, then $\lim_{t\to\8} t^{\chi}\P[Y>t]$ exists and  is
positive. Recently, asymptotic properties of solutions of
\eqref{iteration} in the boundary case, when $\theta'(1)=0$ were also
described (see \cite{BK,Bu}).

The multidimensional case ($d>1$) was studied recently in \cite{BDMM, Mentemeier, Mirek}. In \cite{BDMM} the authors consider two classes of invertible matrices: similarities  (products of dilations and orthogonal matrices) and general matrices, however under quite restrictive assumptions (continuity of the distribution and irreducibility of the action on the unit sphere, see \cite{AM, BDMM} for more details). Fixed points of $T$ in the multidimensional situation describe e.g. equilibrium distributions of kinetic gas models in statistical physics (see \cite{BL, BM, BDMM}, here $d=3$), or the joint asymptotics of the number of key comparisons and key exchanges in Quicksort (see \cite{NR2006}, in fact, there an inhomogeneous version of equation \ref{iteration} is considered). The multidimensional equation can be also interpreted in the context of 'colored' particles numbered from 1 to $d$ randomly moving on a tree, \cite{BN}.

\medskip

In this paper we consider the multidimensional situation ($d>1$) under assumption that the support of $\mu $ leaves invariant a proper closed convex cone $C \subset \R^d$, e.g. $\R^d_+=[0,\8)^d$. We study existence of fixed points and properties of their tails, and we prove analogues of the results of \cite{KP, G}. Our setting includes nonnegative matrices as considered by Kesten \cite{K1} (we will strengthen several of his results about the action of products of such matrices) and also some other classes of matrices being  natural generalizations of such. In particular, in contrast to \cite{BDMM, Mirek}, we do not assume the matrices to be invertible.  Below, after giving an ad-hoc version of our main result, we will describe an example where the multidimensional equation with nonnegative and noninvertible matrices is explicitly needed. Precise statements of the main results will be given in the subsequent section, after introducing some more concepts. For a preliminary version of this paper see \cite{BDG0}.

We thank the referees for useful remarks which helped to improve strongly the previous version of the paper. 

\subsection{Ad-hoc version of the main result}

At first, we need a few pieces of notation, namely a multidimensional analogue of the function $\theta(s)$. Let $(A_i)_{i \in \N}$ be a sequence of i.i.d. copies of $A$ (independent of $N$) and introduce
$$ \kappa(s) := \lim_{n \to \infty} \Erw{\norm{A_n \dots A_1}^s}^{1/n}.$$ Then $\Erw{N} \kappa(s)$ will play the role of $\theta(s)$. It will be shown that this function is log-convex and that $\kappa(1)$ equals the spectral radius of $\Erw{A}$. Write $\delta_v$ for the Dirac measure in $v$ and $\lambda_a$ for the Perron-Frobenius eigenvalue of a positive matrix (i.e all entries $>0$).

\medskip

There is an obvious way to construct a fixed point of $T$, namely iteration. This can be done as follows:
Set
\begin{equation}\label{defAi}A^i = \1_{\{ i \le N\}}A_i,
\end{equation} i.e. the matrices with indexes larger than the random number $N$ are just zero, while the others are i.i.d.
 Consider the Ulam-Harris tree $J=\bigcup_{n=0}^\infty \N^n$ with root $\emptyset$. For a vertex $\gamma \in \N^n$ write $\abs{\gamma}=n$ for its generation. Assign to every vertex $\gamma$ an i.i.d. copy $(A^i(\gamma))_{i \in \N}$ of $(A^i)_{i \in \N}$ and let $\F_n$ be the $\s$-field
$$ \F_n := \sigma\Bigl((A^i(\gamma))_{i \in \N})\ : \ \abs{\gamma} < n   \Bigr).$$
  One should think of the matrix $A^i(\gamma)$ as a weight along the edge connecting $\gamma$ with its $i$-th successor $\gamma i$. The product of weights along the shortest path connecting $\gamma$ to the root is then defined recursively by
 $$L(\emptyset)= {\rm Id} \qquad \text{and} \qquad L(\gamma i) = L(\gamma) A^i(\gamma).$$
 Given a nonzero vector $v \in C$ with $\E N\, \E A\, v = v$,
 we consider the sequence of random
variables
\begin{equation}\label{mandelbrot cascade}Y_n := \sum_{|\gamma|=n}L({\gamma})v
\end{equation} called Mandelbrot's
cascade (or weighted branching process associated with $(A^i)_{i \in \N}$ and $v$), a main feature being that the law of $Y_n$ equals $T^n \delta_v$, while the process $Y_n$ forms a martingale w.r.t. $\F_n$, which will be shown to converge a.s. to a random variable $Y$.

\begin{thm}\label{thm:adhoc}
Let $\mu$ be a probability measure on the set  of nonnegative random matrices with no zero row and no zero column. Assume that $\supp\, \mu$ contains a positive matrix  and that there is a positive vector $w$ such that $\Erw{N}\, \Erw{A}\, v = v$ (the existence of such $w$ is equivalent to $\Erw{N}\, \kappa(1)=1$). Let $N \ge 2$ a.s. and $\Erw{N^2} < \infty$. Then
$$ \text{ $Y$ is a non-trivial fixed point of $T$   $\quad \Leftrightarrow \quad$ the (left) derivative $\kappa'(1^-) < 0$.} $$

\medskip

\noindent Assume in addition, that $N \ge 2$ is constant and that the multiplicative subgroup generated by
$ \{ \lambda_a \, : \, a \in \supp\, \mu, \text{ $a$ is positive }\}$
is dense  in $\R_+$. Assume that $\P(\is Y u = r)=0$ for all nonnegative $r$ and $u \in (0,\infty)^d$. If there is $\chi >1$ s.t. $N \kappa(\chi)=1$ and some moment conditions on $A$ are satisfied, then there is a continuous function $D(x)=\abs{x}^\chi D(\frac{x}{\abs{x}})$ on $(0,\8)^d$, such that
$$ \lim_{t \to \infty} t^\chi\P(\skalar{Y,x}>t ) = D(x) $$
for all $x \in (0,\8)^d$. The function $D(x)$ is strictly positive.
\end{thm}

We close the introduction by giving the afore-mentioned example.

\begin{ex}
Menshikov, Petritis and Popov \cite{MPP} study positive recurrence of the so-called bindweed model and obtain a necessary and sufficient condition, namely the a.s convergence of the series
\begin{equation}\label{MPPsum} \sum_{n =0}^\8 \, \sum_{\abs{\gamma}=n} L(\gamma) \end{equation}
(in our notation). In their case, the entries of the matrix $A$ are ratios of random transition probabilities, hence nonnegative. Moreover they assume that $A$ is positive a.s., but not necessarily invertible. Thus the assumptions of the first part of Theorem \ref{thm:adhoc} are satisfied.

In \cite[Theorem 2.5]{MPP} they show that
\begin{itemize}
\item the series converges, if $\inf_{s \in (0,1]} \Erw{N} \kappa (s) < 1$ whereas
\item the series diverges, if $\inf_{s \in (0,1]} \Erw{N} \kappa(s) > 1$.
\end{itemize}
It remained an open question, what happens if $\inf_{s \in (0,1]} \Erw{N} \kappa(s) = 1$. But this corresponds exactly to the case $\Erw{N}\, \kappa(1)=1$ with $\kappa'(1^-) \le 0$.
Our result shows that if $\kappa'(1^-) < 0$, then $Y_n = \sum_{\abs{\gamma}=n} L(\gamma) v$ converges a.s. to a non degenerate limit, hence the sum in \eqref{MPPsum}
is divergent.

\end{ex}

\section{Statement of main results and notation}

In this section, we will first introduce the precise hypothesis which we are going to impose on the law $\mu$ of the random matrices resp. its support $\supp\ \mu$. Then we will state our main theorems and conclude with an outline of the paper.

\subsection{Notation and hypotheses}

When considering a multivariate problem, one usually has to impose moment conditions which are similar to those known from the one-dimensional situation and additionally some purely multivariate, mainly geometric assumptions. This applies here as well, and we will start with the geometric part, i.e. assumptions on the support of $\mu$.

\subsubsection{Geometric assumptions}
We fix a
proper closed convex subcone $C\subset V$ with nonempty interior
$C^0\not= \emptyset$. Proper means that the cone must be contained in a halfspace of $V$.
Let $$C^* = \{x\in V; \is
xy\ge 0 \mbox{ for any } y\in C\}$$ be the dual cone of $C$.  Then $C^*$ is necessarily closed and convex with non-empty interior. 
We  adopt for $C^*$ similar
notations as for $C$ and write $a^*$ for the dual map of $a$
defined by $\is{a^*x}y = \is x{ay}$ ($x,y\in V$).

 Introducing $C_+= C \setminus\{0\}$, set
\begin{equation}\label{def:S}
S~:=~ \{ a \in {\rm End}(V) \ : \ a C_+ \subset C_+, \ a^* C^*_+ \subset C^*_+\}.
\end{equation}

Defining  $$C_1 ~:=~ \{x\in C; |x|=1\}$$ as the intersection of $C$ with the unit sphere, we see that for all $a \in S$ its action on $C_1$ is well defined by
$$ a \cdot x := \frac{ax}{\abs{ax}}$$ and we write
$$\iota(a):=\inf_{x\in C_1}|ax|.$$ Since $C_1$ is compact
we
have that $\iota(a)>0$ for $a\in S$.

Let
\begin{equation}
S^0 ~=~ \{a\in S\ :\ aC_+\subset C^0\} ~=~ \{ a \in S\ :  a^* C_+^* \subset (C^*)^0 \}.
\end{equation}
According to the Krein-Rutman theorem, each element $a$ of $S^0$ has a positive dominant eigenvalue
$\lambda_a$ with corresponding dominant eigenvector $v_a\in C^0$. We will assume $v_a$ to be normalized, i.e. $v_a \in C_1$. 



 If the cone $C$ is $(\R_+)^{d}$, for $\R_+ = [0,\8)$, then $C^*=C$, $S$ is the semigroup of matrices with nonnegative entries that have neither a zero row nor  a zero column
and  $S^0$ consists of matrices with strictly positive entries. Another special
case we have in mind is the cone $C$ of real positive semi-definite
matrices. Then $S$ contains the set of mappings $M\to aMa^t$,
where $a$ is an invertible
matrix.
More generally, $C$ can be a symmetric cone (e.g. the light
cone, see \cite{FK}) or a homogeneous cone \cite{V}.

We assume that $A$ is a random element of $S$ distributed according to a given probability measure $\mu\in M^1(S)$.
We denote by $[{\rm supp}\ \mu]$ the smallest closed
subsemigroup of $S$ containing ${\rm supp}\ \mu$ and we will assume that it satisfies the following condition $(\H)$:

\medskip

We say that a subsemigroup $\Gamma$ of $S$ satisfies condition ($\H$) (compare \cite{H}), if
\begin{itemize}
\item[(a)] each $a \in \Gamma$ is \emph{allowable}, i.e. $aC^0 \subset C^0$ and $a^*\, (C^*)^0 \subset (C^*)^0$, and \item[(b)] $\Gamma \cap S^0 \neq \emptyset$.
\end{itemize}

\medskip
\begin{rem}
\begin{enumerate}
\item
The definition of $S$ as well as condition $(\H)$ are such that the roles of $C$ and $C^*$ can be interchanged, i.e. all results that will be proven for matrices acting on $C$ under hypotheses hold as well for their adjoint matrices acting on $C^*$ and vice versa.
\item Observe that $\Gamma=\{ a^n \, ; \, n \in \N \}$ for some $a \in S^0$ would be a legal choice, i.e. $\Gamma$ might be quite degenerate. This is the main reason why we sometimes have to impose a regularity condition on the fixed points, namely that  $\P(\is Y u = r)=0$ for all nonnegative $r$ and $u \in C^*$.
\item  It is enlightening to compare $(\H)$ with the so-called i-p condition (i-p for irreducibility and proximality) which has been studied intensively in \cite{Fu,GL,GLnotes}: A closed subsemigroup $G$ of the group of invertible matrices ${\rm GL}(V)$ satisfies the i-p condition, if
\begin{itemize}
\item (irreducible): $G$ is {\em strongly irreducible}, i.e. no finite union $\bigcup_{i=1}^n W_i$ of proper subspaces $W_i \subsetneq V$ satisfies
\begin{equation}\label{strongirred} G \left( \bigcup_{i=1}^n W_i \right) \subset \bigcup_{i=1}^n W_i, \end{equation}
\item (proximal): $G$ contains at least one element with a unique simple dominant eigenvalue.
\end{itemize}
The proximality assumption corresponds to part (b) of $(\H)$, while  $(a)$ is always satisfied  for $a \in {\rm{GL}(V)}$ that leave $C$ and $C^*$ invariant, since it maps open sets into open sets. { In the present work, we do not assume irreducibility in general, but we will use a weaker version relative to $C$ which is a sufficient condition e.g. for the regularity of fixed points, in the sense mentioned above. }


\end{enumerate}
\end{rem}

We will also be led to consider the following aperiodicity condition: We say that $\Gamma$ is aperiodic, if
\begin{equation} \tag{A}
\Delta(\Gamma):= \{\lambda_a:a\in\Gamma\cap S^0\} \text{ generates a dense multiplicative subgroup of } \R_+.
\end{equation}
We observe that if $\Gamma\subset {\rm GL}(V)$, ${\rm dim } V>1$ and $\Gamma$ satisfies condition $i-p$, then $\Gamma$ is aperiodic, see \cite{GU}.
{ Correspondingly, we will denote
\begin{equation}
	\Lambda(\Gamma) := \overline{\{ {v_a} \ : \ a \in \Gamma \cap S^0 \}} \, \subset  C_1
\end{equation} for the {\em limit set of $\Gamma$ in $C_1$}
and write $V(\Gamma)$ for the linear subspace generated by $\Lambda(\Gamma)$. It is shown below that $\Lambda(\Gamma)$ is $\Gamma$-invariant and thus $V(\Gamma)$ as well.}

\subsubsection{Moment assumptions}
Let $(A_i)_{i \in \N}$ be a sequence of i.i.d. copies of $A$.
 We observe that if $s\ge 0$, the number $$\kappa(s)=\lim_{n\to\8}\E[\norm{A_n\ldots A_1}^s]^{\frac 1n}$$
 is well defined in the interval $$I_{\mu} = \{ s\ge 0; \E [\norm{A}^s]<\8 \}.$$ Indeed,
 since $u_n(s) = \E[\norm{A_n\ldots A_1}^s]$ is submultiplicative,  $u_n^{\frac 1n}(s)$ converges to $\inf_{k\ge 1}\E[\norm{A_k\ldots A_1}^s]^{\frac 1k}$. The function $\log \kappa(s)$
is convex on $I_{\mu}$. If $d=1$, $\kappa(s)$ is the Mellin transform of $\mu$.

 As in \cite{KP},  we will assume that $A$
and $N$ have finite expectations: $\E [|A|]+\E [N] <\8$ and we
denote $m=\E[A]\in S$. If
$\rho\in M^1(C_+)$ has a finite mean $\rho_1$ the same is true for
$T\rho$ and the mean $(T\rho)_1$ of $T\rho$ is  $(T\rho)_1 =
\E[N] m\rho_1$. Hence $T$ preserves $M^1_1(C_+)$, the convex subset of
$M^1(C_+)$ of elements with nonzero finite mean. In particular, if a fixed point with finite expectation vector $\rho_1$ exists, then the spectral radius $r(m)$ of $m$ necessarily has to be equal to $\frac1{\E[N]}$.

{ As it will be seen below, condition ($\H$) implies that $m=\E A$ has a
unique dominant eigenvector $v\in C_1^0$ with $m v = r(m) v$. It follows from $\Gamma V(\Gamma) \subset V(\Gamma)$ that $V(\Gamma)$ is $\E A$-invariant and consequently, $v \in V(\Gamma) \cap C_1^0$.}
 The
same is true for $m^*$; let $v^*$ be the unique
eigenvector of $m^*$ in $C_1^*$. It will be shown that $\kappa(1)
= r(m) = r(m^*)$ (Lemma \ref{wcor2.3}).

\medskip


\medskip



%

\medskip

 By abuse of notation the
function $x \mapsto\is {v^*}x$ on $C$ will be denoted also by $v^*$.  We observe that the convolution operator $P$ on $C_+$ defined by
\begin{equation}
\label{defP}
P\phi(x) = \int_S \phi(ax)\mu(da)
\end{equation}
 admits the eigenfunction
$v^*$ with the eigenvalue $r(m)$.
 We will  consider also an analogous Markov operator on $C^*_+$ defined by
$$
P_*\Phi(x) = \int_S \Phi(a^*x)\mu(da).
$$

In the whole paper we will denote by $C$ the cone and sometimes, by abuse of notation, also
continuous functions. We will use the symbol $D$ to denote auxiliary constants, which will appear
in the sequel.


\subsection{Statement of main results}

Recall the construction of the Mandelbrot's cascade $Y$ described above, which was given in terms of an eigenvector $v$ of $\Erw{A}$.
We obtain the following generalization of the result of Kahane and Peyri\`ere \cite{KP}:
\begin{thm}
\label{existence}
Assume that the semigroup $[{\rm supp}\ \mu]$ satisfies
condition ($\H$), that $\Erw{\norm{A}}<\8$, the dominant
eigenvalue $r(m)$ of $\E[A]$ satisfies $\E[N]r(m)=1$,
$N\ge 2$ a.s. and $\E[N^2]<\8$.
Then the following  are equivalent:
\begin{enumerate}
\item $\E[Y]= v$.
\item $\E[Y] \not= 0 $.
\item There exists a fixed point $Z$ of \eqref{iteration} in $C$  such that $\E|Z|<\8$ and $\E Z\not=0$.
\item $\kappa'(1^-)< 0$.
\end{enumerate}
 Moreover if $s>1$, $\E\big[|Z|^s\big]<\8$ if and only if $s$ satisfies $\kappa(s)\E[N]< 1$. {\red The random variable $Y$ takes its values in $V(\Gamma)\cap C_+$, $P(Y=0)=0$ and the law of $Y$ is a fixed point of \eqref{iteration} with finite mean.  } If $Z$ satisfies (3), then $Z$ is proportional to $Y$.
\end{thm}
The derivative $\kappa'(1^-)$ can be explicitly computed, however since the formula requires some further definitions, we postpone the details to Section \ref{sec: change} (see Theorem \ref{wthm2.4}).

\medskip

 Let $\rho $ be the law of the fixed point $Z$ obtained in Theorem \ref{existence}.
The following asymptotics of $\rho$ is a generalization of
one dimensional results of Guivarc'h \cite{G} and Liu \cite{L}.
\begin{thm}
\label{asymptotic}
Suppose that the hypothesis of Theorem \ref{existence} are satisfied and $Z$ is a fixed point of $T$. Assume additionally that
\begin{enumerate}
\item $[\supp\, \mu]$ is aperiodic,
\item $N\ge 2$ is constant,
\item there exists $\chi>1$  with $\kappa(\chi)=1/N$,
\item $\E\big[ \norm{A^*}^{\chi}\, \abs{\log\norm{A^*}}\big]$, $\E\big[ \norm{A^*}^{\chi}\, \abs{\log \iota(A^*)}\big]$ are both finite,
\item assume
that for any $r> 0$  and any $u\in C_1^*$, $\P[\is Zu =r]=0$.
\end{enumerate}
Then 
for every $x\in C^*_+$
$$
\lim_{t\to\8} t^{\chi} P[\is Zx >t] = D(x) >0,
$$ where $D(x)$ is a $\chi$-homogeneous $P_*$-harmonic positive function (i.e. $D(tx) = t^{\chi}D(x)$ and $P_*D(x) = D(x)$), uniquely defined up to a positive coefficient by this property.
\end{thm}
{ For $\Gamma = [{\rm supp} \mu]$, a sufficient condition for the regularity hypothesis (5) to be satisfied will be given in Lemma \ref{inverti}: the action of $\Gamma$  on $V(\Gamma)$ is by invertible linear operators.}
\medskip

Notice, that in view of the result of Boman and Lindskog \cite{BoLi}  Theorem \ref{asymptotic} implies that $\rho$
has multivariate regular variation, i.e. the family of measures $t^{\chi} \delta_t\cdot \rho$, where $\delta_t\cdot \rho$
is the push-forward of the measure $\rho$ by the dilation $x\to tx$ ($t>0$), converges vaguely on $\R^d\setminus \{0\}$ to a $P$-harmonic $\chi$-homogeneous Radon measure. { Notice moreover, that by \cite[Proposition 2.5]{BDMM}, if Assumption $(3)$ is satisfied, the fixed points of $T$ are unique up to scaling.}

\subsection{The structure of the paper}

In the paper we try to follow closely the one dimensional arguments due to Kahane, Peyri\`ere \cite{KP} and to Guivarc'h \cite{G}.  The main difficulty to overcome is that we have to replace scalars by vectors and multiplication by positive numbers by  action of matrices. In the multidimensional setting all the concepts require more intrinsic approach. Thus we are led to introduce many definitions. For the readers convenience, we give a list of symbols in the appendix.

\medskip

A basic tool in \cite{G} is a change of measure, the multivariate analogue of which will be introduced in Section \ref{sec: transfer} (with proofs contained in Section \ref{sec: transfer proof}). Therefore, we have to study operators on $C(C_1)$ related to $P$ and their spectral properties. The limiting function $D(x)$ in Theorem \ref{asymptotic} will for example be given via an eigenfunction of these operators. To prove Theorem \ref{existence}, following \cite{KP}, we  show in Section \ref{sec: mandelbrot} that the Mandelbrot's cascade converges to a nontrivial limit and this limit provides a fixed point of \eqref{iteration}. In Section \ref{sec: change} we prove a strong law of large numbers for products of random matrices, which provides a formula for $\kappa'(1^-)$ and is inter alia needed to check the assumptions of Kesten's renewal theorem in Section \ref{sec: renewal}. Together with the change of measure, this renewal theorem is the main technical ingredient in the proof of Theorem \ref{asymptotic} in Section \ref{sec: proof}. Proofs of some technical lemmas are postponed to the appendix.

%
%
%
%
%
%
%
%
%
%

\section{Change of measure}
\label{sec: transfer}

A fundamental tool in the proofs of the main theorems will be 
a change of measure associated with the relation $\kappa(\chi)=1$.
 Its basic idea is well known: Let $\hat{\mu}$ be the increment law of a (one-dimensional multiplicative) random walk $\hat{S}_n$ with negative drift and let there be $\chi >0$ with $\int \abs{a}^\chi \hat{\mu}(da) = 1$, then the random walk with increment law $\abs{a}^\chi \hat{\mu}(da)$ has a positive drift and is used to prove that $t^\alpha\, \P(\max_n \hat{S}_n > t)$ converges to a positive limit as $t \to \infty$ (see \cite[Example XII.4(b)]{feller}). Just copying this idea will not work for matrices, because if $\int \norm{a}^\chi \, \mu(da)=1$ we only know that $\int \norm{a_2 a_1}^\chi \, \mu^{\otimes 2}(da_1, da_2) \le 1$. Instead, we are going to introduce a change of measure with the help of kernels $(q_n^\chi(x,a))_{n \in \N}$ that behave approximately like $\norm{a}^\chi$ and satisfy for each $x \in C_1$ that
$\int q_n^\chi(x,a_n \dots a_1) \, \mu^{\otimes n}(da_1,\ldots, da_n) =1$.  In this section and the next section, we will develop, following \cite{GL} the theory which is necessary to define these kernels and the change of measure. Here and below, $\mu^{\otimes n}$ stands for the $n$-fold product measure $\mu \otimes \ldots \otimes \mu$ on $S \times \ldots \times S$.

The formula for the kernels will be given in a moment, beforehand, we need to define a class of transfer
operators on $C_1$, the eigenfunctions and spectral properties of which will play a crucial role.
For $s \in I_\mu$, we define a bounded operator on $C(C_1)$ by
$$
P^s\psi(x) = \int_{S}|ax|^s\psi(a\cdot x)\mu(da),
$$ where $a\cdot x = \frac{ax}{|ax|}\in C_1$.
Notice, that the operators $P^s$ are related to the operator $P$ defined in \eqref{defP}.
Namely, if $\phi(x) = \psi(\ov x)|x|^s$ with $ \psi\in C(C_1)$ and $\ov x = \frac{x}{|x|}\in C_1$, then
$P^s \psi(x)  = P \phi(x) $ for $x\in C_1$.
 We are also going to consider the bounded linear operator $P_*^s$ on $C(C_1^*)$ defined by
$$
P^s_*\psi(x) = \int_{S}|a^*x|^s\psi(a^*\cdot x)\mu(da) = \int_{S}|ax|^s\psi(a\cdot x)\mu^*(da),\qquad \psi\in C(C_1^*),\qquad \psi\in C(C_1^*).
$$
For $\Gamma = {\rm supp} \mu$ both families of operators are well defined due to to property (a) of $(\H)$, while their behavior is governed by property (b) of $(\H)$. In order to get a feeling, consider the simplest case of $\mu$ satisfying $(\H)$, namely the Dirac measure on some $a \in S^0$. It is a consequence of the Birkhoff-Hopf theorem (see \cite[Theorem A.7.1]{LNbook}) that such $a \in S^0$ has an algebraic simple dominant eigenvalue $\lambda_a$ and the corresponding eigenspace is one-dimensional, in particular, there is a unique eigenvector $v_a \in C^0$ with $a v_a = \lambda_a v_a$. It follows that the operator $P^s$ has the algebraic simple dominant eigenvalue $\kappa(s)= \lambda^s$ with the corresponding eigenmeasure being the Dirac measure on $v_a$.

We will prove the following properties of the operators.

\begin{prop}
\label{wthm2.2}
Assume that $\mu\in M^1(S)$ is such that $[{\rm supp\ \mu}]$ satisfies condition ($\H$),
and let $s\in I_{\mu}$. Then it holds that:
\begin{enumerate}
\item The equation $$P^s\psi = \kappa(s)\psi,\qquad  \psi\in C(C_1)$$
has a unique normalized solution $\psi = e^s$ $(|e^s|_\8 = 1)$. The function $e^s$ is strictly positive
and $\ov s$-H\"older with $\ov s = \inf\{1,s\}$.
\item There exists a unique $\nu^s\in M^1(C_1)$ with
$$P^s \nu^s = \kappa(s)\nu^s$$ and we have ${\rm supp}\ \nu^s = \Lambda([{\rm supp\ \mu}])$.
\item The mappings $s \mapsto \es$ and $s \mapsto \nus$ are continuous on $I_\mu$ with respect to the topologies of uniform resp. weak convergence.
\item In the same way there is a unique strictly positive and $\ov s$-H\"older continuous function $\est \in C(C_1^*)$ and a unique $\nust \in M^1(C_1^*)$ such that
 $$P_*^s\est = \kappa(s) \est, \qquad \Pst \nust = \kappa(s) \nust,$$
 and the mappings $s \mapsto \est$ and $s \mapsto \nust$ are continuous.
\item The strictly positive function
\begin{equation}
\label{eq: etilda}
\wt e^s(x) = \int_{C_1^*} \is xy^s \nu_*^s (dy),
\end{equation}
is proportional to $\es$ while $e_*^s(x)$ is proportional to $\int \is xy^s \nu^s(dy)$.
\end{enumerate}
\end{prop}

\begin{rem}
The study of these operators goes back to Kesten: In the proof of \cite[Theorem 3]{K1}, spectral properties of an operator $T_\chi$, which is $P_*^\chi$ in the case $C = \R^d_+$, are studied, but only existence of the eigenfunction, the eigenmeasure and the formula for the spectral radius are proved. In particular the uniqueness, now proved, is crucial when using Markov chain Monte Carlo algorithms to actually calculate $\es$ or $\nus$ (see \cite{BS2009,Janssen2010}).
\end{rem}

\subsection{The change of measure}
Now the change of measure can be introduced. Define
\begin{equation}
\label{qn}
q_n^s(x,a) = \frac{|ax|^s}{\kappa^n(s)}\frac{e^s(a\cdot x)}{e^s(x)}
\end{equation}
and observe that
$$\int q^s_n(x,a_n \dots a_1) \mu^{\otimes n}(da_1, \dots, da_n) = \frac{1}{k(s)^n \es(x)} (\Ps)^n \es (x) = 1.$$
Then the system of probability measures $q_n^s(x,\cdot)\mu^{\otimes n}$
is a projective system, hence we can define by the Kolmogorov extension theorem   its projective limit
$\Q_x^s$ on $\Omega=S^{\N}$. We denote by $\E_x^s$ the corresponding
expectation symbol. For $(a_n)_{n \in \N} = \omega \in \Omega$, write $ A_n(\omega) = a_n$ and
$$S_n(\omega) := A_n \cdots A_1 (\omega).$$ If $\E,$ $\P$ are used without any sub-/superscript, then it is always stipulated that $(A_n)_{n \in N}$ is an i.i.d. sequence  having law $\mu$, as it has been used previously.  Then, for all $n \in \N$ and all  measurable $f:S^n \to \R_+ $,
\begin{equation}\label{eq:changeofmeasure}
\E_x^s f(A_1, \dots, A_n) = \frac{1}{k(s)^n \es(x)}\E \left[ \abs{S_n x}^s \es\left(S_n \cdot x \right) f(A_1, \dots, A_n)  \right].
\end{equation}
By (2) of Proposition \ref{wthm2.2}, defining the probability measure $\pi_s$ by $$\pi_s (f) :=  \nus(f \es)/ \nus(\es),$$ we infer that
$$\Q^s = \int \Q_x^s \pi^s(dx)$$ defines a probability measure on $\Omega$ which is invariant w.r.t. the shift
$\theta$ on $\Omega=S^{\N}$.

Observe that for all $x \in C_1$, $(S_n \cdot x)_{n \in \N}$ is a Markov chain under $\Qxs$ with transition operator given by
$$ Q^s f(x) = \frac{1}{k(s) \es(x)} \Ps (f \es) (x).$$
We adopt the notation for the dual situation and define in an analogous way $Q^{s,*}$, $q^{s,*}$, $\pi^{s,*}$, $\Q^{s,*}$, $\Q_x^{s,*}$, $\E_x^{s,*}$.
Since condition $(\H)$ is is symmetric, all that will be shown below holds as well for the dual counterparts.

This information is sufficient to immediately proceed to the proof of Theorem \ref{existence} in Section \ref{sec: mandelbrot}, i.e. a quick reader may skip the next section where the proof of Proposition \ref{wthm2.2} will be given.

\section{Properties of transfer operators -- proof of Proposition \ref{wthm2.2}}
\label{sec: transfer proof}

The proof of Proposition \ref{wthm2.2} consists of several steps.  We use  ideas developed in \cite{GL,GLnotes} in a different framework. First, as a general technical prerequisite, we will introduce a metric $b$ on $C_1$ with the main properties that every $a \in S^0$ is a contraction w.r.t. to $b$, and that $b$ is bounded -- in contrast to the usual Birkhoff distance. It will be used in several places where condition (b) of $(\H)$ is applied. Next, we study the set $\Lambda(\Gamma)$, before we turn to the proofs of the spectral radius formula and the existence of $\nus$ and $\es$.
Having those results is enough to define the change of measure and the Markov operator $Q^s$. In fact, we will then study $Q^s$ and prove that it is ergodic with unique invariant measure $\pi^s$, which implies the uniqueness results for $\Ps$. Finally, the uniqueness will be used to prove the continuity assertions for $s \mapsto \es$ and $s \mapsto \nus$.

\subsection{A metric on $C_1$}
Following Hennion \cite{H}, who considered the particular case $C=(\R_+)^d$, we can introduce a variant of Hilbert's cross-ratio metric on $C_1$, which is suitable for studying spectral properties of the operators $P^s$ resp. $P^s_*$. Since its definition plays no role in the subsequent arguments, we will postpone it to the Appendix and only give some properties.

\begin{lem}\label{lem:metric}
There is a metric $b : C_1 \times C_1 \to [0, 1]$ on $C_1$ with the following additional properties:
\begin{enumerate}
\item There is $d>0$ such that $b(x,y) \ge d |x-y|$ for all $x,y \in C_1$,
\item for any compact $K \subset C_1$, $(K,b)$ is a complete metric space, which is homeomorphic to $(K, | \cdot |)$,
\item for all allowable $a$ there is $d(a) \le 1$ such that
\begin{enumerate}
\item $b(a \cdot x, a \cdot y) \le d(a) b(x,y)$,
\item $d(a) <1$ if and only if $a \in S^0$,
\item $d(aa') \le d(a) d(a')$ for all allowable $a'$.
\end{enumerate}
\end{enumerate}
\end{lem}

Then, by the Banach fixed point theorem, every $a \in S^0$ possesses a unique attractive fixed point $v_a \in C_1$. This is an eigenvector for $a$ acting on $V$ and we denote the corresponding eigenvalue by $\lambda_a >0$.

\subsection{The limit set}

For a semigroup $\Gamma$ of allowable matrices, set
\begin{equation}
	\Lambda(\Gamma) := \overline{\{ v_a \ : \ a \in \Gamma \cap S^0 \}}.
\end{equation}

\begin{lem}\label{lem:invariant set}
The set $\Lambda(\Gamma)$ is $\Gamma$-invariant, i.e. $\Gamma \cdot \Lambda (\Gamma) \subset \Lambda(\Gamma)$. Moreover, for $x \in C_1$ the closure of the orbit $\Gamma \cdot x$ contains $\Lambda(\Gamma)$. In particular, $\Lambda(\Gamma)$ is the unique minimal closed $\Gamma$-invariant subset of $C_1$.  Any $\Gamma$-invariant subspace $W$ with $W \cap C_+ \neq \emptyset$ contains $V(\Gamma)$. 

If there is a finite set of subspaces $W_i$ ($i\in I$) with $W_i \cap C_+ \neq \emptyset$, such that each $a \in \Gamma \cap S^0$ permutes these subspaces, then
 each $W_i$ contains $V(\Gamma)$.
\end{lem}

 {\bf Remark}. The last assertion shows the connection with  the irreducibility and proximality condition which is used intensively
for  analogous statements in \cite{GL,GLnotes}. Here we restrict to subspaces which intersect $C_+$. The corresponding concept will be called $C$-strong irreducibility {\red  -- see Lemma \ref{lem:strongirreducible} . There, we will also consider the linear subspace 
$$ V^-(\Gamma) := V(\Gamma^*)^\perp \subsetneq V,$$ i.e. the orthogonal space of $V(\Gamma^*)$, which in turn is the subspace generated by $\Lambda(\Gamma^*)$ with the latter being non-trivial due to hypothesis ($\H$).} 

\begin{proof}
Let $a' \in \Gamma$, $v_a \in \Lambda(\Gamma)$, i.e. $v_a = a \cdot v_a$ for some $a \in \Gamma \cap S^0$ (or there is a sequence $v_{a_n} \to v_a$). Then for every $n$, $a' a^n \in \Gamma \cap S^0$ due to property (a) of $(\H)$, hence $v_{a' a^n} \in \Lambda(\Gamma)$. Applying the properties of $b$, we deduce from
$$ b(v_{a' a^n}, a' \cdot v_a) = b(a' a^n \cdot v_{a' a^n}, a' a^n \cdot v_a) \le d(a') d(a)^n \, b(v_{a' a^n}, v_a)$$
that $v_{a' a^n}$ tends to $v_a$ as $n$ goes to infinity. Since $\Lambda(\Gamma)$ is closed, we infer the $\Gamma$-invariance.

If now $W \neq \emptyset$ is any closed $\Gamma$-invariant subset of $C_1$, we have to prove that $\Lambda(\Gamma) \subset W$. Let $x \in W$, then for all $a \in \Gamma \cap S^0$,  $a^n \cdot x \in W$ for any $n$ due to the invariance of W. But $a^n \cdot x \to v_a$ and the assertion follows since $W$ was assumed to be closed. The same argument shows that $v_a$ is in the closure of the orbit $\Gamma \cdot x$ for any $x \in C_1$. 

{\red For the last assertion notice that if $a \in \Gamma\cap S^0$ permutes the subspaces $W_i$, then for any $i$, for a subsequence, and any $x\in W_i\cap C_1$, we have $a^{n_j} x\in W_i$ and $v_a =\lim_j a^{n_j} \cdot x \in  W_i $. Since these limits generate $\Lambda(\Gamma)$, it follows that $V(\Gamma) \subset W_i$.}
\end{proof}

Then standard arguments from the theory of iterated random Lipschitz functions (see e.g. \cite{DF}) yield the following result:

\begin{lem}
\label{wprop2.1}
If $\mu\in M^1(S)$ is such that $[{\rm supp}\ \mu]$
satisfies ($\H$), then there exists a unique $\mu$-stationary measure $\nu$ on $C_1$ with ${\rm supp}\ \nu = \Lambda(\Gamma)$.
\end{lem}

\subsection{Existence of eigenfunctions}

We are going to prove the following
\begin{prop}\label{prop:existence es}
Assume that $\mu \in M^1(S)$ is such that $[{\rm supp}\, \mu]$ satisfies condition $(\H)$ and let $s \in I_\mu$. Then the spectral radius $r(\Ps)$ of $P^s$ equals $\kappa(s)$ and there are $e^s \in C(C_1)$ and $\nu^s \in M^1(C_1)$ with
$$ P^s e^s = \kappa (s) e^s, \qquad P^s \nu^s = \kappa(s) \nu^s.$$
The function $e_s$ is strictly positive and $\overline{s}$-H\"older with $\overline{s}=\inf \{s,1\}$ and $\supp\ \nus \supset \Lambda(\Gamma)$.

Similarly, the spectral radius of $\Pst$ equals $\kappa(s)$ and there are a strictly positive $\bar{s}$-H\"older function $\est$ on $C_1^*$ and $\nust \in M^1(C_1^*)$
such that
$$ \Pst \est = \kappa (s) \est, \qquad \Pst \nust = \kappa(s) \nust.$$
\end{prop}

Before we are going to prove the proposition, we will need one technical lemma.
\begin{lem}\label{lem:snxsn}
The function $\tau(x) = \inf_{a \in S} \frac{\abs{ax}}{\abs{a}}$ is strictly positive on $C^0$. On every compact subset
$K \subset C^0$, $\inf_{y \in K} \tau(y) > 0$.

If $\nu^*$ is a probability measure on $C_1^*$ with $\supp \nu^* \cap (C^*)^0 \neq \emptyset$, then there is $d_s$ such that for all $a \in S$,
$$ \int_{C^*_1} \abs{a^* y}^s \nu^*(dy) \ge d_s \norm{a}^s. $$
\end{lem}

\begin{proof}
{\sc Step 1.} We observe that the subset of ${\rm End}(V)$, $S^1 = \{ a \in S \, ; \, \norm{a} = 1\}$ is relatively compact, hence its closure $\overline{S^1}$ is compact. If $a \in S^1$, then ${\rm Ker } a \cap C^0 = \emptyset$, hence ${\rm Ker } a \cap C^0  = \emptyset$ if $a \in \overline S^1$. It follows that, if $x \in C^0$, $\abs{ax} > 0$ for any $a \in \overline{S^1}$. Since $a \mapsto \abs{ax}$ is continuous on $\overline{S^1}$, and for any $a \in S$,
$\frac{a}{\norm{a}} \in \overline{S^1}$, it follows $\tau(x) = \inf_{a \in S} \abs{ax} > 0$ is attained. The same argument is valid for the function $(a, x) \mapsto \abs{ax}$ on $S^1\times K$, hence $\inf_{x \in K} \tau(x) > 0$.

{\sc Step 2.} In the same way, one proves that $\tau^*(y)= \inf_{a \in S} \frac{\abs{a^* y}}{\norm{a}}$ is strictly positive on $(C^*)^0$.  Consequently, if the support of $\nu^*$ has nonempty intersection with $C_1^*$, then
$$ \inf_{a \in S} \int_{C^*_1} \frac{\abs{a^* y}^s}{\norm{a}^s} \nu^*(dy) \ge  \int_{C^*_1} \inf_{a \in S} \frac{\abs{a^* y}^s}{\norm{a}^s} \nu^*(dy)  > 0, $$
and the assertion follows.
\end{proof}

\begin{proof}[{Proof of Proposition \ref{prop:existence es}}]
{\sc Step 1.} First we will prove existence and properties of the eigenfunction. We proceed
 as in \cite{K1}. We introduce a self-mapping $\widetilde{\Ps}$ on $M^1(C_1)$ by $\widetilde{\Ps} \nu := \frac{\Ps \nu}{\nu(C_1)}$.
Due to the Schauder-Tychonoff theorem, there is an invariant probability measure $\nu^s$ which becomes an eigenmeasure of $\Ps$. Similarly,
  $P_*^s$ has an eigenmeasure $\nust$ as well. Denote the corresponding eigenvalue of $\Pst$ by $k(s)$.  Set $\Gamma:= [{\rm supp} \mu]$.
Upon defining $$\es(x):= \int_{C_1} \skalar{x,y}^s \, \nust(d y),$$
we see that
\begin{align*} \Ps \es(x) &~= \int_{\Gamma} |a x|^s \int_{C_1} \skalar{a \cdot x, y}^s \, \nu(d y) \ \mu(d a)  ~=~ \int_{\Gamma} \int_{C_1} \skalar{a x, y}^s \, \nu(d y) \ \mu(d a) \\
 &~= \int_{\Gamma} \int_{C_1} \skalar{x, a^* y}^s \, \nu(d y) \ \mu(d a) ~=~  \int_{C_1}  \int_{\Gamma} |a^* y|^s \skalar{x, a^* \cdot y}^s \, \mu(d a) \ \nu(d y) \\
&~= \int_{C_1} \skalar{x, y}^s \, (\nust \Pst) (d y) ~=~\int_{C_1} \skalar{x, y}^s \, k(s) \nust (d y) = k(s) \es(x)
\end{align*}
Of course, $\supp \nust$ is $\Gamma^*$-invariant. Hence, by lemma \ref{lem:invariant set}, $\Lambda(\Gamma^*) \subset \supp \nust$. Since $\Gamma^*$ satisfies $(\H)$ as well,  $\supp \nust \cap (C^*)^0 \neq \emptyset$. Consequently, $\es(x)>0$ for all $x \in C_1$.

That $\es$ is $s$-H\"older with respect to $(C_1, | \cdot |)$ follows from its very definition. But since $b(x,y) \ge d |x-y|$, it follows that
$$ \sup_{x,y \in C_1} \frac{|f(x)-f(y)|}{b(x,y)^{\bar{s}}} \le \sup_{x,y \in C_1} \frac{|f(x)-f(y)|}{d |x-y|^{\bar{s}}} < \infty.$$

\medskip

{\sc Step 2.} Now we consider the spectral radius $r(P^s)$.
Observing that
$$ (\Ps)^n f(x) = \E \Big[\left| A_n \dots A_1 x \right|^s \, f(A_n \dots A_1 \cdot x)\Big] \le |f|_\infty \E \big[ \left\|A_n \cdots A_1 \right\|^s\big], $$
the inequality
$$ k(s) \le r(P^s) \le \lim_{n \to \infty} \Big( \E \big[\left\| A_n \dots A_1 \right\|^s\big]\Big)^{\frac 1n}$$
follows.

Conversely, it suffices to prove that
$  k(s) \ge \lim_{n \to \infty} \big( \E \big[\left\| A_n \dots A_1 \right\|^s\big]\big)^{\frac 1n}.$
Here, we use that $\Pst \nust = k(s) \nust$ and that $\supp \nust \cap (C^*)^0 \neq \emptyset$. Thus, by Lemma \ref{lem:snxsn}
$$ k(s)^n = (\Pst)^n \nus(1) = \int \E \abs{A_n^* \dots A_1^*y}^s \, \nus(dy) =\E  \left[\int  \abs{A_n^* \dots A_1^*y}^s \, \nus(dy) \right] \ge d_s \E \norm{A_n \dots A_1}^s .$$

\medskip

{\sc Step 3.}
We have proven thus far that $r(\Ps)=k(s)=\kappa(s)$ with $\Ps \es = k(s) \es$ and $\Pst \nust = k(s) \nust$.
The same holds true with the roles of $\Ps$ and $\Pst$ interchanged, thus we deduce from $\|A\| = \|A^*\|$ that
 $\kappa(s)=r(\Pst)$ and $\Ps \nus= k(s) \nus$.
\end{proof}

We note the following formula and estimate for $\kappa(s)^n$ resp. $\norm{a}^s$:
\begin{cor}\label{cor:estESn}
There is $d_s >0$ such that for all $n \in \N$, $a \in S$
$$  \kappa(s)^n = \int \E \abs{S_n x}^s \, \nus(dx) \ge d_s \E \norm{S_n}^s$$
and
$$ \norm{a}^s \le \frac{1}{d_s} \int \abs{ax}^s \nus(dx).$$
\end{cor}


\subsection{Uniqueness of the eigenfunctions and eigenmeasures}

We are going to show that  the Markov operator $Q^s$ defined on $C(C_1)$ by
$$
Q^s \phi(x) := \frac{1}{\kappa(s) e^s} P^s(\phi e^s)(x) $$
 has a unique invariant probability measure $\pi_s$, given by $\pi_s (f) :=  \nus(f \es)/ \nus(\es),$ and that all $Q^s$-invariant functions (from $C(C_1)$) are constant.
This corresponds to the uniqueness (up to scaling) of $\es$ and $\nus$ as eigenfunctions resp. eigenmeasures of $\Ps$, corresponding to the dominant eigenvalue $\kappa(s)$.

%

Therefore, we will use the following result from the theory of Markov chains on general state space.

\begin{thm}[{\cite[Proposition 18.4.4]{MT}, \cite{R}}]\label{thm:meyntweedie}
Let $X$ be a compact space and let $Q : C(X) \to C(X)$ be an equicontinuous Markov operator. If an reachable and aperiodic state exists for the associated Markov chain, then there is a unique invariant probability measure $\pi$ and for all $f \in C(X)$,
$$ \lim_{n \to \infty} Q^n f = \pi(f).$$
Consequently, any $Q$-invariant function is constant.
\end{thm}

Recall, that the measures $\Qxs$ can already be defined -- only existence of $\es$ is needed therefore, and that
$$ X_n := S_n \cdot x$$
defines a Markov chain with transition operator $Q^s$ and initial value $X_0=x$ under $\Qxs$.

Before applying Theorem \ref{thm:meyntweedie}, we should give some definitions  of the terms used there:
We say that $Q^s$ is equicontinuous, if for every $f \in C(X)$ the sequence $\{(Q^s)^n f\}$ is equicontinuous. Then the Markov chain $(X_n)$ is called an e-chain.
For $x \in C_1$, let $\mathfrak{O}(x)$ be the family of open sets $O \subset C_1$ containing $x$. Then $x$ is called
\begin{itemize}
\item \emph{reachable}, if $\sum_{n=1}^\infty \Q_y^s(X_n \in O) >0$ for all $O \in \mathfrak{O}(x)$ and all $y \in C_1$,
\item \emph{topologically recurrent}, if $\sum_{n=1}^\infty \Qxs (X_n \in O) = \infty$ for all $O \in \mathfrak{O}(x)$ and
\item \emph{aperiodic}, if $x$ is topologically recurrent and for each $O \in \mathfrak{O}(x)$ there is $n(O)$ such that $\Qxs(X_n \in O)>0$ for all $n \ge n(O)$.
\end{itemize}

\begin{lem}\label{lem:eqcon}
$Q^s$ is equicontinuous.
\end{lem}

In order to prove the lemma, we will consider the dense subset $H_s \subset C(C_1)$ of $\bar{s}$-H\"older continuous function with respect to the distance $b$.
We denote the H\"older-norm of such functions by
$$ [f]_s := \sup_{x,y \in C_1} \frac{\abs{f(x)- f(y)}}{b(x,y)^{\bar{s}}}.$$

We cite from \cite{GL} the result that the kernels $q_n^s$ are $\bar{s}$-H\"older:

\begin{lem}[{ \cite[Lemma 2.11]{GL}}]\label{lem:prop:qn}
There is $D_s < \infty$ such that for all $n \in \N$, $x,y \in C_1$, $a \in S$
$$ \abs{q_n^s(x,a) - q_n^s(y,a)} \le D_s \frac{\norm{a}^s}{k(s)^n} b(x,y)^{\bar{s}}.$$
\end{lem}

\begin{proof}[{Proof of Lemma \ref{lem:eqcon}}]
Consider first $f \in H_s$. Then
\begin{eqnarray*}
\abs{(\Qs)^n f(x) - (\Qs)^n f(y)}
 &= & \abs{ \E_x^s f(X_n) - \E_y^s f(X_n)} \\
&\le& \E_x^s \abs{ f(S_n \cdot x) - f(S_n \cdot y)  } + \abs{(\E_x^s - \E_y^s) f(S_n \cdot y)}\\ &=& I + II.
\end{eqnarray*}
Considering $I$,
\begin{align*}
I \le [f]_s\, \E_x^s \big[   b(S_n \cdot x, S_n \cdot y)^{\bar{s}}\big] \le [f]_s b(x,y)^{\bar{s}} \ \E_x^s d(S_n)^{\bar{s}} \le [f]_s b(x,y)^{\bar{s}}.
\end{align*}
Turning to $II$, we have, using Corollary \ref{cor:estESn} and Lemma \ref{lem:prop:qn},
$$ II \le \abs{f}_\infty \E \big| q_n^s(x,S_n)  - q_n^s(y,S_n) \big|
\le \frac{\abs{f}_\infty D_s b(x,y)^{\bar{s}}}{k(s)^n} \E \norm{S_n}^s
\le \abs{f}_\infty \frac{D_s}{d_s} b(x,y)^{\bar{s}}.
$$
Thus we have proven that for all $n \in \N$,
\begin{equation} \abs{(\Qs)^n f(x) - (\Qs)^n f(y)} \le D(f) b(x,y)^{\bar{s}} \label{eq:eqcon}\end{equation}
for some constant depending only on $f$, which shows the equicontinuity of the family ${(\Qs)^n f}$ as soon as $f \in H_s$.
But each $f \in C(C_1)$ can be approximated (w.r.t. $\abs{\cdot}_\infty$) by functions in $H_s$, thus the assertion for general $f$ follows.
\end{proof}

\begin{lem}
There is a reachable and topologically recurrent $v \in C_1$.
\end{lem}

\begin{proof}
In the case where $C$ is the nonnegative cone, this is proven in \cite[p.218-220, proof of I.1]{K1}. Since our result can be proved along similar lines, we only give the basic idea of the proof:
Choose $a \in \Gamma \cap S^0$ with unique attracting fixed point $v_a \in C_1$, i.e. $\lim_{n \to \infty} a^n \cdot x = v_a$ for all $x \in C_1$. Such $a$ exists due to part (b) of condition $(\H)$. Given any initial point $x \in C_1$ and any neighborhood $O$ of $v_a$, there is $n \in N$ such that $a^n \cdot x \in O$. If now $a$ is generated with positive probability, then the same holds for any power $a^n$, which proves that $v_a$ is reachable. If $a$ is a continuity point of the support of $\mu$ resp. of one of its powers $\mu^{\otimes n}$, then small perturbations $a'$ of $a$ still have the property that $(a')^n \cdot x \in O$.

Given a neighborhood $O$ of $v_a$, one can use the compactness of $C_1$ to find $n \in \N$ and $\epsilon >0$ such that $\Qxs(X_n \in O) \ge \epsilon$ for all $x \in C_1$. Then a geometric trials argument yields the topological recurrence of $v_a$.
\end{proof}

Now we are ready to prove the main result of this section.

\begin{thm}\label{thm:Qs ergodic}
The Markov operator $\Qs$ has a unique invariant probability measure $\pi^s$, and for all $\phi \in C(C_1)$,
$$ \lim_{n \to \infty} \Qs \phi = \pi^s(\phi),$$ hence all $\Qs$-invariant functions are constant.
\end{thm}

\begin{proof}
We start with a reduction, called geometric sampling: Defining the Markov operator
$$\overline{\Qs} := \sum_{n=1}^\infty 2^{-n} (\Qs)^n,$$
we observe that every invariant measure or invariant function of $\Qs$ is an invariant measure resp. invariant function for $\overline{\Qs}$. Thus, in order to prove uniqueness, it suffices to show that $\Qs$ satisfies the assumption of Theorem \ref{thm:meyntweedie}. The estimate \eqref{eq:eqcon} remains valid for $\overline{\Qs}$ as well, hence the equicontinuity follows.
Obviously, the existence of a reachable and recurrent state carry over, too. But due to the geometric sampling, any recurrent state is already aperiodic. Thus, Theorem \ref{thm:meyntweedie} applies to the operator $\overline{\Qs}$.
\end{proof}

\begin{cor}
The function $\es$ is unique up to scaling, $\nus$ is the unique  eigenmeasure in $M^1(C_1)$ and $\supp \, \nus = \Lambda([\supp \mu])$.
\end{cor}

\begin{proof}
Recall that $\es$ is strictly positive on $C_1$. If $\Ps \phi = \kappa(s) \phi$, then $\phi/\es$ is $Q_s$-invariant, hence constant. Similar, if $\Ps \rho = \kappa(s) \rho$ for $\rho \in M^1(C_1)$, then
$\es(x)\rho(dx)/\rho(\es)$ is $Q^s$-invariant, hence equal to $\pi^s = \es(x)\nus(dx)/\rho(\es)$, thus $\nus=\rho$ follows.
Finally, observe that $\Qs$ is also well defined as an operator on $C(\Lambda([\supp \mu]))$. Using Schauder-Tychonoff, there is a $\Qs$-invariant measure $\rho$, supported on $\Lambda([\supp \mu])$. By the $[\supp\, \mu]$-invariance of $\Lambda([\supp \mu])$, $\rho$ is as well invariant for $\Qs$ acting on $C(C_1)$. But then, $\rho = \nus$ and consequently, recalling Proposition \ref{prop:existence es},
$$ \Lambda([\supp \mu]) \supset \supp\, \rho = \supp\, \nus \supset \Lambda([\supp \mu]).$$
\end{proof}

\subsection{Continuity of the mappings $s \to \es$, $s \to \nus$}

In the proof of Theorem \ref{existence} we will need continuity of the mapping $s \to \es$. Since we have an explicit formula for $e^1$, this result is of interest in its own right in order to study $e^s$ for $s$ close to $1$.

\begin{prop}\label{prop:continuity e nu}
The mappings $s \to \es$ and $s \to \nus$ are continuous on $I_\mu$ with respect to $\abs{\cdot}_\infty$ resp. to weak convergence. The same holds for $s \mapsto \est$ and $s \mapsto \nust$.
\end{prop}

\begin{proof}
We follow the proof given in \cite{GL}.

{\sc Step 1}. Given $s_0 \in I_\mu$, consider any sequence $s_n \to s_0$, such that the sequence $\nu^{s_n}$ converges to a limit $\eta \in M^1(C_1)$ -- recall that $M^1(C_1)$ is compact w.r.t. the topology of weak convergence. It suffices to show that $\eta=\nu^{s_0}.$ W.l.o.g. let $s_n$ be from a compact subinterval $I \subset I_\mu$.
Due to continuity of $\kappa$ on $I$,
$$ \lim_{n \to \infty} P^{s_n} \nu^{s_n} = \lim_{n \to \infty} \kappa(s_n) \nu^{s_n} = \kappa(s_0) \eta.$$
Then for any $f \in C(C_1)$, we have that $\sup_{s \in I} \abs{\Ps f}_\infty \le \abs{f}_\infty \sup_{s \in I} \E \abs{A_1}^s \le D \abs{f}_\infty$, and consequently,
\begin{align*}
\abs{(P^{s_n} \nu^{s_n})f - (P^{s_0} \eta)f} \le&~\abs{\nu^{s_n}(P^{s_n} f) - \eta(P^{s_n} f)} + \abs{\eta(P^{s_n}f)- \eta(P^{s_0}f)} \\
\le &~ \abs{\nu^{s_n}- \eta}\left( D \abs{f}_\infty \right) + \eta\left( \abs{P^{s_n} f - P^{s_0}f } \right) \to 0.
\end{align*}
Thus $P^{s_0} \eta = \kappa(s_0) \eta$ and consequently, due to uniqueness of $\nu^{s_0}$, $\eta=\nu^{s_0}$. The same calculation shows that $s \mapsto \nu_*^s$ is continuous.

{\sc Step 2}. We use the formula $\es(x) = \int \skalar{x,y}^s \, \nust(dy)$ and note that for each fixed $y \in C_1$ and compact subset $I \subset I_\mu$, the family ${\skalar{\cdot,y}^s}_{s \in I}$ is equicontinuous, thus $\abs{\skalar{\cdot ,y}^s - \skalar{\cdot,y}^{s_0}  }_\infty \to 0$ as $s \to s_0$. We compute
\begin{align*}
\abs{e^s-e^{s_0}}_\infty \le&~ \abs{\int \skalar{\cdot ,y}^s \, (\nu_*^{s} - \nu_*^{s_0})(dy)}_\infty +  \abs{ \int \left(\skalar{\cdot ,y}^s - \skalar{\cdot,y}^{s_0}\right) \, \nu_*^{s_0} }_\infty \\
\le&~ \abs{\nust - \nu_*^{s_0}}(1) + \int \abs{\skalar{\cdot ,y}^s - \skalar{\cdot,y}^{s_0}  }_\infty\, \nu_*^{s_0} \to 0,
\end{align*}
referring to Step 1. A similar proof applies to $s \mapsto \est$.
\end{proof}

\subsection{Calculation of $\kappa(1)$}
It has been an open problem ever since to compute the function $\kappa$. The following lemma, proved in the Appendix \ref{appendix}, gives an explicit formula for $\kappa(1)$. This allows to easily check the assumptions of the main theorem, namely $\kappa(1)=1/\E N$ and $\kappa'(1^-)<0$. Due to convexity of $\kappa(s)$ and subadditivity of the norm a sufficient condition for the latter one would be e.g. that $\E \abs{A_1}^s <1/\E N$ for some $s >1$.
\begin{lem}
\label{wcor2.3}
Let $m = \int a\mu(da)$. Then for some $n\ge 1$, $m^n\in S^0$ and if $v^* \in C_1^*$ is the dominant
eigenvector of $m^*$ we have: $\kappa(1) = r(m)$, $e^1(x)=\frac{\is{v^*}x}{|v^*|_\8}$.
\end{lem}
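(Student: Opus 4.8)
The plan is to apply Theorem \ref{wthm2.2} with $s=1$ and then identify the eigenobjects explicitly. First I would observe that condition $\C$ provides an element $a_0 \in [\supp\mu]\cap S^0$, and since $m = \E A = \int a\,\mu(da)$ with $\mu$ giving positive mass near $a_0$ (more precisely, $S^0$ is open by the remark after its definition, so $\mu(S^0)>0$), an averaging/convexity argument shows that some power $m^n$ lies in $S^0$: indeed $m^n = \int a_n\cdots a_1\,\mu^{\otimes n}(da_1\cdots da_n)$, and on the event that at least one factor lies in $S^0$ — which has positive $\mu^{\otimes n}$-probability for $n$ large — the product lies in $S^0$ (since $S^0$ absorbs $S$ under composition: $a S_+ \subset S^0$ for $a\in S^0$, hence also $S a_0 S \subset S^0$), while the remaining part of the integral lies in the closed convex cone $S$; since $S^0$ is the interior of the convex cone $S$ and a convex combination of an interior point with any point of $S$ stays interior, we get $m^n\in S^0$. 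Consequently $m$ has a unique dominant eigenvector $v\in C_1^0$ with $mv = r(m)v$ and, by duality (condition $\C$ passing to $\Gamma^*,C^*$), $m^*$ has a unique dominant eigenvector $v^*\in C_1^*$ with $m^*v^* = r(m)v^*$; note $r(m)=r(m^*)$.

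Next I would compute $P^1$ applied to the linear function $x\mapsto \is{v^*}{x}$. By definition \eqref{defP} and linearity of the integral,
$$
P^1(\is{v^*}{\cdot})(x) = \int_S \is{v^*}{ax}\,\mu(da) = \int_S \is{a^*v^*}{x}\,\mu(da) = \bis{\Big(\int_S a^*\mu(da)\Big)}{x} = \is{m^*v^*}{x} = r(m)\is{v^*}{x}.
$$
Thus $x\mapsto\is{v^*}{x}$ is a strictly positive eigenfunction of $P$ on $C^0$ with eigenvalue $r(m)$; restricting to $C_1$, the function $\phi(x)=\is{v^*}{x}$ satisfies $P^1\phi = r(m)\phi$ on $C_1$ (strict positivity on $C_1$ follows because $v^*\in C_1^*$ lies in the interior dual cone, so $\is{v^*}{x}>0$ for all $x\in C\setminus\{0\}$; this uses that $\C$ for $\Gamma^*,C^*$ forces $v^*\in (C^*)^0$). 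By the uniqueness part of Theorem \ref{wthm2.2} applied with $s=1\in I_\mu$ (recall $\kappa(1)<\infty$), the normalized positive eigenfunction $e^1$ is unique, so $e^1 = \phi/|\phi|_\infty$, i.e. $e^1(x)=\is{v^*}{x}/|v^*|_\infty$ where $|v^*|_\infty = \sup_{x\in C_1}\is{v^*}{x}$. Finally, uniqueness of the eigenvalue in Theorem \ref{wthm2.2} (or simply comparing: $P^1 e^1 = \kappa(1)e^1$ and $P^1 e^1 = r(m)e^1$ with $e^1\not\equiv 0$) forces $\kappa(1)=r(m)$.

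The only genuinely delicate point is the first step — showing $m^n\in S^0$ for some $n$, and more importantly that $m\ne 0$ has a \emph{unique} dominant eigenvector in $C_1^0$, together with the fact that $v^*\in (C^*)^0$ so that $\is{v^*}{x}$ is strictly positive on all of $C_+$ (not merely on $C^0$); this is what makes $e^1$ strictly positive and lets the uniqueness clause of Theorem \ref{wthm2.2} apply. Once $m^n\in S^0$, the Birkhoff-contraction discussion recalled before Proposition \ref{wprop2.1} gives uniqueness of $(v,r(m))$ and $v\in C^0$ directly, and dually for $v^*$; the rest is the one-line computation above. I do not expect the H\"older regularity or support statements of Theorem \ref{wthm2.2} to be needed here.
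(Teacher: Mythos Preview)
Your approach is essentially the paper's: show $m^n\in S^0$ by a convexity argument, deduce existence and interiority of the dominant eigenvectors $v,v^*$, verify $P^1\is{v^*}{\cdot}=r(m)\is{v^*}{\cdot}$, and invoke the uniqueness clause of Theorem~\ref{wthm2.2}. Two points deserve tightening.

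First, a slip in the $m^n\in S^0$ step: condition~$\C$ gives $a_0\in[\supp\mu]\cap S^0$, not $a_0\in\supp\mu$, so you cannot conclude $\mu(S^0)>0$. What you do get is that for some $k\ge 1$, $\mu^{\otimes k}\{\omega:S_k(\omega)\in S^0\}>0$ (since $S^0$ is open and some product of elements of $\supp\mu$ lies in it); then your convexity argument applies verbatim to $m^k$. The paper phrases this as $\mu^n=u\mu_0+(1-u)\mu_1$ with $u\in(0,1)$ and $\mu_0\in M^1(S^0)$.

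Second, your derivation of $\kappa(1)=r(m)$ is circular as written: you first conclude $e^1=\phi/|\phi|_\infty$ from uniqueness in Theorem~\ref{wthm2.2}, but that uniqueness is stated only for the eigenvalue $\kappa(1)$, so one needs $r(m)=\kappa(1)$ \emph{before} identifying $e^1$ with $\phi$. The clean way to extract this from Theorem~\ref{wthm2.2} is via its convergence statement: $(P^1)^n\phi/\kappa(1)^n\to \nu^1(\phi)\,e^1/\nu^1(e^1)>0$, while $(P^1)^n\phi=r(m)^n\phi$; comparing forces $r(m)=\kappa(1)$ and then $\phi$ proportional to $e^1$. The paper takes a different route here and proves $\kappa(1)=r(m)$ directly, independently of Theorem~\ref{wthm2.2}: using Lemma~\ref{wlem2.6} on $C^*$ (with $v^*\in(C^*)^0$) to bound $|a|=|a^*|\le D|a^*v^*|$, and then $|a^*v^*|\le D'\is{a^*v^*}{x}$ for fixed $x\in C_1^0$, one gets $\E|A_n\cdots A_1|\le DD'\,r(m)^n\is{v^*}{x}$, hence $\kappa(1)\le r(m)$; the reverse inequality is just $|m^n|=|\E(A_n\cdots A_1)|\le\E|A_n\cdots A_1|$. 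Only after this does the paper invoke Theorem~\ref{wthm2.2} to identify $e^1$.
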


%
%
%
%
%
%
 \subsection{Aperiodicity}

In order to apply Kesten's renewal theorem in the proof of Theorem \ref{asymptotic}, we will have to impose an additional aperiodicity assumption, namely:
\begin{equation} \label{wprop2.2}\tag{A}
\Delta:= \{\lambda_a:a\in\Gamma\cap S^0\} \text{ generates a dense multiplicative subgroup of } \R_+.
\end{equation}

\begin{rem}\label{rem4.1}There is a far from being obvious relation between aperiodicity and invariant subspaces:
It is proved in \cite{GR,GU} that if $\mu$ is supported on a subset of the group ${\rm GL}(V)$ of invertible matrices, then condition $(\H)$ together with strong irreducibility
 and proximality with $d >1$  of $[\supp\, \mu]$ is sufficient for $\Delta$ to generate a dense multiplicative subgroup of $\R_+$.
%
\end{rem}

\section{Mandelbrot's cascades - Proof of Theorem \ref{existence}}
\label{sec: mandelbrot}

As said before, the main burden of the proof is to show that $Y$ is not identically zero if $\kappa'(1^-) < 0$. In order to extend the approach of \cite{KP} to the multidimensional situation, we are going to consider for $h \le 1$ the quantities $$\Erw{\wt e^h(Y)} = \Erw{ \int_{C_1^*} \is{Y}{u}^h \, \nu_*^h(du)}.$$
On the one hand, $\Erw{\wt e^h(Y)}$ is positive if and only if $\P(Y \neq 0)>0$ while on the other hand, satisfies the identity
$$ \Erw{\wt e^h(AY)} - \Erw{\wt e^h(Y)}= (\kappa(h)-1) \Erw{\wt e^h(Y)}$$
which will in essence be used to link the derivative of $\kappa$ in 1 with properties of $Y$. Therefore, continuity of the mappings $h \mapsto \nu_*^h$, $h \mapsto e_*^h$ will be needed, which was proved in Proposition \ref{prop:continuity e nu}.

\begin{proof}[Proof of Theorem \ref{existence}]

{\sc Step 1.}
{ By Lemma \ref{wcor2.3}, $m=\Erw{A}$ has a dominant eigenvector $v \in C_+$, which is also an element of $V(\Gamma)$ due to Lemma \ref{lem:invariant set}.  Recall from \eqref{mandelbrot cascade} the definition of the Mandelbrot's cascade $Y_n = \sum_{\abs{\gamma}=n} L(\gamma) v$. Then $Y_n \in V(\Gamma)$ for all $n \in \N$.}
For every $i\ \in \N$, we define $Y_n^i$ the shifted version  of $Y_n$,
exactly in the same way as $Y$, but for the subtree rooted at the $i$th child of the root $\emptyset$. Then
$Y_{n+1}=\sum_{i=1}^{\8}A^i(\emptyset) Y_n^i$. By the definition of $A^i$ (see \eqref{defAi})
$$
\E[Y_{n+1}|\F_n] =  \Erw{N} \sum_{|\gamma|=n} L(\gamma) \E[A] v = \Erw{N} r(m) Y_n = Y_n.
$$
{ Then $Y_n$ is a $C_+ \cap V(\Gamma)$-valued martingale, hence $Y_n$ converges
a.e. to $Y\in C \cap V(\Gamma)$. }This follows from the fact there exists a basis such  that the elements of the cone $C$ can be expressed as linear combinations of elements of the basis with positive coefficients and from convergence of positive martingales. We present more details in
Lemma \ref{lem: mtgconvergence} in Appendix \ref{appendix}.

By what has been said above, in the limit we
have
$$
Y = \sum_{i=1}^{\infty} A^i(\emptyset) Y^i =_d \sum_{i=1}^N A_i Y^i
$$ with $Y^i = \lim_{n\to\8}{Y_n^i}$. Next
$\E[Y]\in C$ and $Y_i$'s are independent with the same law as $Y$.
So, if $Y\not= 0 $, we have a solution of \eqref{iteration} and we
are led to discuss below the nondegeneracy of $Y$.

\medskip

{\sc Step 2.}
As an immediate consequence of the construction we obtain equivalence of conditions (1), (2) and (3).
For the remaining part of the proof we  apply arguments
of Kahane and Peyri\`ere \cite{KP} and Proposition \ref{wthm2.2} to our settings.

\medskip

{\sc Step 3.} We  prove  that (4)  implies (1).
Assume  $\kappa'(1^-)<0$. We will use the following inequality
$$
\bigg(\sum_{i=1}^k y_j  \bigg)^h \ge \sum_{i=1}^k y_j^h - 2(1-h) \sum_{i<j}(y_i y_j)^{\frac h2}
$$
which is valid for $y_i>0$ and $h\in(1-\eps,1]$ for some small $\eps$  independent of $k$ (see \cite{KP}, Lemma C).
Then, we can estimate from below the function $\wt e^h$ defined in \eqref{eq: etilda}. Namely,   for $x_i\in C$, $i=1,...,k$, using Proposition \ref{wthm2.2}
we obtain:
\begin{eqnarray*}
\wt e^h\bigg(\sum_{i=1}^k x_i \bigg) &=& \int_{C_1^*} \bis {\sum_{i=1}^k x_i}u^h\nu^h_*(du)\\
&\ge& \sum_{i=1}^k\wt e^h(x_i)  - 2(1-h)\sum_{i<j} \int_{C_1^*} \is {x_i}u^{\frac h2}\is {x_j}u^{\frac h2}\nu^h_*(du)\\
&\ge& \sum_{i=1}^k\wt e^h(x_i)  - 2(1-h) \sum_{i<j} \bigg(\int_{C_1^*} \is {x_i}u^h\nu^h_*(du)\bigg)^{\frac 12} \bigg(\int_{C_1^*} \is {x_j}u^h\nu^h_*(du)\bigg)^{\frac 12}\\
&=& \sum_{i=1}^k \wt e^h(x_i)  - 2(1-h) \sum_{i<j} \wt e^h(x_i)^{\frac 12} \wt e^h(x_j)^{\frac 12}.
\end{eqnarray*}
Hence, for $Y_n$ and $Y_{n-1}^i$ as defined above,
we have
$$\wt e^h(Y_n) = \wt e^h\bigg( \sum_{i=1}^N  A_i Y_{n-1}^i  \bigg)
\ge \sum_{i=1}^N \wt e^h\big(A_i Y_{n-1}^i\big)  - 2(1-h)\sum_{i<j} \wt e^h\big(A_i Y_{n-1}^i\big)^{\frac 12}\wt e^h\big(A_j Y_{n-1}^j\big)^{\frac 12},
$$
Taking expected value of both sides, we obtain, using Proposition \ref{wthm2.2}:
\begin{eqnarray*}
\E\big[ \wt e^h(Y_n)\big] &=& \E\bigg[ \E\big[ \wt e^h(Y_n)\big| N \big]\bigg] \\
& \ge& \E\Bigg[
\E \bigg[ N \kappa(h) \E\big[ \wt e^h(Y_{n-1})\big]
- N(N-1)(1-h)\bigg(\E\Big[\wt e^h(A_1 Y_{n-1}^1)^{\frac 12}\Big]\bigg)^2\bigg| N\bigg]\Bigg]\\
&=&
\en \kappa(h) \E\big[\wt  e^h(Y_{n-1})\big]
- (1-h) \E\big[N(N-1)\big] \bigg(\E\Big[\wt e^h(A_1 Y_{n-1}^1)^{\frac 12}\Big]\bigg)^2
\end{eqnarray*}
Notice that for every $y\in C_1^*$, $\is {Y_n}y$ is a martingale, so $\is {Y_n}y^h$ is
a supermartingale hence $\E\big[ \is{Y_n}y^h \big]\le \E\big[ \is{Y_{n-1}}y^h \big]$.
Integrating both sides with respect to $\nu^h_*(dy)$ we obtain
 $\E[\wt e^h(Y_n)] \le \E[\wt e^h(Y_{n-1})]$. Therefore
$$\bigg(\E\Big[e^h(A_1 Y_{n-1}^1)^{\frac 12}\Big]\bigg)^2 \ge \frac{ (\E[N]\kappa(h)-1 )\E [e^h(Y_{n-1})]}{\E[N(N-1)](1-h)}$$
and going  with $h$ to the left limit at 1, 
we obtain
\begin{eqnarray*}
 \Bigg(\E\bigg[\bigg(\int_{C_1^*} \is {A_0 Y_{n-1}}u \nu^1_*(du)\bigg)^{\frac 12}\bigg]\Bigg)^2
&=& \bigg(\E\Big[e^1(A_0 Y_{n-1})^{\frac 12}\Big]\bigg)^2\\
&\ge& - \frac{\kappa'(1^-)\E[N]}{\E[N(N-1)]}\cdot
\E\big[ e^1(Y_{n-1})\big]\\ &=&- \frac{\kappa'(1^-)\E[N]}{\E[N(N-1)]} \E\bigg[ \int_{C_1^*} \is{Y_{n-1}}u \nu^1_*(du) \bigg]\\
 &=&
- \frac{\kappa'(1^-)\E[N]}{\E[N(N-1)]} \int_{C_1^*} \is v u \nu^1_*(du) =|v|^2 \cdot D>0.
\end{eqnarray*}

Define now $W_n = \int_{C_1^*} \is{A_0 Y_n}u \nu^1_*(du)$. Then $W_n$ is a positive martingale
and it converges pointwise to $W = \int_{C_1^*}\is{A_0 Y}u\nu^1_*(du)$. Hence the sequence $(W_n)^{\frac 12}$
converges  pointwise to $(W)^{\frac 12}$ and  we will prove that the convergence holds   also in the norm.
For this purpose it is sufficient to observe that the family of random variables $\{(W_n)^{\frac 12}\}$
is uniformly integrable. Indeed since for any positive $x$
$$
x\P[W_n >x] \le \E\big[ W_n\big] = \frac 1{\E[N]} \int_{C_1^*}\is vu \nu^1_*(du)=D_2
$$
and
$$
\E\big[ W_n \big] = P^1 e^1(v) = \frac{e^1(v)}{\E[N]} = D_3
$$
we have
$$
\lim_{x\to \8}\sup_n \E\Big[ (W_n)^{\frac 12} {\bf 1}_{\{W_n>x\}} \Big] \le  \lim_{x\to \8}\sup_n \Big( \E\big[ W_n\big] \Big)^{\frac 12}
\P[W_n>x]^{\frac 12} \le \lim_{x\to \8} \frac {\sqrt{D_3}\cdot D_2}{\sqrt x} = 0.
$$
Therefore, since $W_n^{\frac 12}$ is a supermartingale bounded in $L^2$,
$$\E\big[ ( W)^{\frac 12}\big] = \lim_{n\to\8} \E\big[(W_n)^{\frac 12}\big] \ge \sqrt{D} $$
and so,  the random variable $Y$ cannot be degenerate.

\bigskip

{\sc Step 4.} Next we prove that (3) implies (4).
We proceed as in \cite{KP} and we use two lemmas proved there: Lemma A and Lemma B, saying that
$$ (x+y)^h \le x^h + h y^h, \quad \mbox{for } x\ge y > 0, 0<h<1
$$
and  for  real valued independent and identically distributed random variables $X,X'$,
there exists  $\eps>0$ such that for all $0<h<1$
$$\E[X^h {\bf 1}_{\{X'\ge X\}}] \ge \eps \E[X^h].$$

Suppose now $Z$ is a  fixed point of \eqref{iteration}.
Let $\{Z_i\}_{i\in\N}$ be a sequence of independent copies of $Z$. We have, using the formulae for $\wt e^h$
\begin{multline*}
\wt e^h(A_1Z_1+A_2Z_2) \le \int_{C_1^*}{\bf 1}_{\{\is {A_1 Z_1}u\le \is {A_2 Z_2}u\}}
(h\is {A_1 Z_1}u^h +\is {A_2 Z_2}u^h) \nu^h_*(du)\\
+ \int_{C_1^*}{\bf 1}_{\{\is {A_1 Z_1}u > \is {A_2 Z_2}u\}}
(\is {A_1 Z_1}u^h +h\is {A_2 Z_2}u^h) \nu^h_*(du).
\end{multline*}
For $h\le 1$ the function $\wt e^h$ is subadditive hence
\begin{eqnarray*}
\E \big[\wt e^h(Z)\big] & = &
\E\Bigg[\E\bigg[ \wt e^h\bigg( \sum_{i=1}^N A_i Z_i \bigg) \bigg| N\bigg]\Bigg]\\
&\le& \E\Bigg[ \sum_{i=3}^N E\Big[ \wt e^h(A_iZ_i) \Big] + \E\Big[\wt  e^h(A_1Z_1+A_2Z_2) \Big]
\bigg| N\Bigg] \\
&\le&
 \en \E\big[ \wt e^h(A_1 Z_1)\big] - 2(1-h)\E \bigg[
\int_{C_1^*}{\bf 1}_{\{\is {A_1 Z_1}u\le \is {A_2 Z_2}u\}} \is {A_1 Z_1}u^h  \nu^h_*(du)\bigg]\\
&\le& \en \kappa(h) \E\big[\wt  e^h(Z)\big] - 2(1-h)\eps \kappa(h) \E\big[\wt  e^h(Z)\big].
\end{eqnarray*}
Hence
$$
2\eps \kappa(h) \E\big[ \wt e^h(Z) \big]\le \frac{\E[N]\kappa(h)-1}{1-h}\cdot \E\big[\wt  e^h(Z)\big]
$$
and passing with $h$ to 1 from below
$$
\frac{2 \eps  \E \big[\wt e^1(Z)\big]}{\E[N]} \le -\E[N]\kappa'(1^-) \E\big[ \wt e^1(Z)\big].
$$
Then, since $\E \big[\wt  e^1(Z)\big]$ is nonzero
$$
\kappa'(1^-) \le -\frac{2\eps}{\E[N]^2}.
$$
Since $\eps>0$ is arbitrary, (4) follows.

\medskip

{\sc Step 5.} As the penultimate step, we have to prove that if $h>1$, then $\E|Z|^h<\8$ if and only if $\kappa(h)\E[N]<1$.

As above be denote by $\{Z_i\}_{i\in \N}$ a sequence of  independent copies of $Z$.
If  $\E|Z|^h<\8$ then also $\E\big[\wt  e^h(Z)\big]<\8$, thus
\begin{eqnarray*}
\E\big[\wt  e^h(Z)\big] &=& \E \Bigg[ \E \bigg[ \int_{C_1} \bis{\sum_{i=1}^N A_i Z_i}u ^h \nu^h_*(du) \bigg| N\bigg] \Bigg]\\
&>&\E\bigg[ \sum_{i=1}^N \E \big[\wt e^h(A_i Z_i)\big]\bigg]
= \en\kappa(h) \E \big[\wt e^h(Z)\big]
\end{eqnarray*}
and since  $\E\big[\wt  e^h(Z)\big]\not= 0$, we deduce $\kappa(h)<\frac 1{\E[N]}$.

\medskip

We omit the converse implication since the argument is exactly the same as in \cite{KP}, p. 137.

\medskip

{\red {\sc Step 6.} We want to prove that $\P(Y=0)=0$. Since we have already seen, that $Y$ takes its values in $V(\Gamma) \cap C$, this will as well imply that in fact, $Y$ is $V(\Gamma) \cap C_+$-valued.  Therefore, we may proceed as in \cite[Theorem 3.2]{DL}:

Since $Y$ is a random variable in $C$, the Laplace transform $\Psi(x):= \E\, e^{- \is xY}$ is finite for all $x \in C^*$. It holds that
$$ \P(Y=0) = \lim_{\abs{x} \to \infty, \, x \in (C^*)^0} \Psi(x) =: \Psi(\infty).$$
In terms of Laplace transform, Eq. \eqref{iteration} becomes
$$ \Psi(x) = \E \, \prod_{i=1}^N \Psi(A_i^* x).$$
Letting $\abs{x}$ tend to infinity while $x$ ranges in $(C^*)^0$, we use that due to assumption $(\H)$, $A^* (C^*)^0 \subset (C^*)^0$, so in the limit,
$$ \Psi(\infty) = \E\,  \prod_{i=1}^N \Psi(\infty) = \E \, \Psi(\infty)^N.$$
Thus, $\P(Y=0)= \Psi(\infty)$ is a fixed point of the function
$ f(t) = \E\, t^N .$ Its only fixed points in the interval $[0,1]$ are 0 and 1, the latter of which is excluded by $\E\, Y \neq 0$, thus $\P(Y=0)=0$.}

Finally, if $Z$ satisfies condition (3) we have, taking conditional expectation with respect to ${\mathcal F}_n$: $Z_n = \sum L(\gamma)w$ with $w\not=0$, eigenvector of $\E A$, hence $w=cv$ with $c>0$. By definition $Z_n$ is a positive martingale, hence from above it converges a.e. to $cY$. Since this limit is $Z$ a.e., we have
$Z=cY$. 

\end{proof}


\section{Strong law of large numbers}
\label{sec: change}

In this section, we will provide the announced formula for $\kappa'(1^-)$ and moreover, prove a strong law of large numbers for the sequence $\log \norm{S_n}$ under each $\Qxs$.

\begin{thm}
\label{wthm2.4}
Assume that $\mu$ satisfies condition ($\H$), $s\in I_{\mu}$ and $|a|^s\log |a|$, $|a|^s\log \iota(a)$
are $\mu$-integrable. Then, for any $x\in C_1$,
$$
\alpha(s) = \lim_{n\to\8}\frac 1n \log|S_n(\omega)x|= \lim_{n\to\8}\frac 1n \log\norm{S_n(\omega)},\qquad \Q^s \mbox{ a.e. and }\Q_x^s \mbox{ a.e.}
$$
where
\begin{equation}
\label{renstar}
\a(s) = \int_S\int_{C_1} \log|ax| q^s(x,a)\pi^s (dx)\mu(da).
\end{equation}

Furthermore the derivative of $\kappa$ exists and is continuous on $I_{\mu}$.
The  derivative of $\log \kappa(s)$ is finite and given by
$\a(s) = \frac{\kappa'(s)}{\kappa(s)}$. In particular, if $\chi>1$ and $\kappa(\chi)=\kappa(1)$, then $\kappa'(\chi)\in (0,\8)$.

Finally, the derivative  $\kappa'(1^-)$ is given by the following formula
$$
\kappa'(1^-) = \frac 1{r(m)} \int \frac{\is {v^*}{ax}}{\is {v^*}x}\log \is{v^*}{ax}\mu(da)\pi^1(dx).
$$

\end{thm}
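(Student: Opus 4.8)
\emph{Proof proposal.} The statement has two essentially independent halves: the almost sure ``renewal'' limit, and the identification $\alpha(s)=\kappa'(s^-)/\kappa(s)$ together with the regularity of $\kappa$. For the first half the plan is to read $\log|S_n(\omega)x|$ as a Birkhoff sum. Writing $X_k=S_k(\omega)\cdot x\in C_1$ one has $X_{k+1}=a_{k+1}\cdot X_k$ and, telescoping, $\log|S_n(\omega)x|=\sum_{k=0}^{n-1}\log|a_{k+1}X_k|$; this is a Birkhoff sum over the ergodic system underlying $\Q^s$, namely the $Q^s$-chain started from $\pi^s$ (which is the \emph{unique} $Q^s$-stationary measure by Theorem \ref{wthm2.2}, whence ergodicity). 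The integrand $g(x,a)=\log|ax|$ is integrable: for $x\in C_1$ one has $\log\iota(a)\le\log|ax|\le\log|a|$, so $|g(x,a)|\le|\log|a||+|\log\iota(a)|$, while $q^s(x,a)=\frac{|ax|^s}{\kappa(s)}\frac{e^s(a\cdot x)}{e^s(x)}\le D\,\kappa(s)^{-1}|a|^s$ since $e^s$ is continuous, normalized and bounded away from $0$ on the compact $C_1$; hence $\int|g|\,q^s(x,a)\pi^s(dx)\mu(da)\le D\,\kappa(s)^{-1}\bigl(\E[|A|^s|\log|A||]+\E[|A|^s|\log\iota(A)|]\bigr)<\infty$ by hypothesis. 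Birkhoff's theorem then gives $\frac1n\log|S_n(\omega)x|\to\alpha(s)$ $\Q^s$-a.e., with $\alpha(s)$ as in \eqref{renstar} (the integrand's mean computed against $q^s(x,a)\pi^s(dx)\mu(da)$).

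To upgrade this to ``every $x\in C_1$, $\Q_x^s$-a.e.'' and to $|S_n(\omega)|$ in place of $|S_n(\omega)x|$ I would argue as follows. Since $[\supp\mu]\cap S^0\neq\emptyset$, there is a block of positions whose $\mu$-weight is positive and whose products lie in $S^0$; applying the ergodic theorem under $\Q^s$ to the indicator of this block, and using $S^0 S_+\subset S^0$, one gets that a.s.\ $S_m(\omega)\in S^0$ for infinitely many $m$. When $S_m(\omega)\in S^0$, every $S_m(\omega)\cdot x$ ($x\in C_1$) lies in a common compact $K_m\subset C_1^0$; on such a $K_m$ the ratios $|by|/|by'|$ are bounded above and below uniformly in $b\in S_+$ (strict positivity of a vector $v^*$ in the interior of $C^*$, as in the proof of Lemma \ref{wcor2.3}, together with Lemma \ref{wlem2.6}), and therefore $\sup_n\bigl|\log|S_n(\omega)x|-\log|S_n(\omega)x'|\bigr|<\infty$ for all $x,x'\in C_1$. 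Moreover the $\F_n$-densities of $\Q_x^s$ relative to $\Q_{x'}^s$ have uniformly bounded ratio once $x'\in C_1^0$ (again Lemma \ref{wlem2.6}: $\tau(x')|S_n(\omega)|\le|S_n(\omega)x'|\le|S_n(\omega)|$), so any $\F_\infty$-event that is $\Q_{x'}^s$-a.s.\ is $\Q_x^s$-a.s. Combining these with the first half (valid for $\pi^s$-a.e.\ starting point, and $\pi^s(C_1^0)>0$) yields $\frac1n\log|S_n(\omega)x|\to\alpha(s)$ for every $x$, $\Q_x^s$-a.e.\ and $\Q^s$-a.e.; and picking $y_0\in C_1^0$, $\tau(y_0)|S_n(\omega)|\le|S_n(\omega)y_0|\le|S_n(\omega)|$ forces $\frac1n\log|S_n(\omega)|$ to the same limit $\alpha(s)$.

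For the second half, note first that $|\alpha(s)|<\infty$ by the bound above. The key point is that $\alpha(s)$ is a subgradient of the convex function $\log\kappa$ at $s$, i.e.\ $\log\kappa(t)\ge\log\kappa(s)+(t-s)\alpha(s)$ for all $t$ (trivial for $t\notin I_\mu$). To see this, use the change of measure $\int|S_n(\omega)x|^t\,\mu^{\otimes n}(d\omega)=\kappa^n(s)\,e^s(x)\,\E_x^s\bigl[|S_n(\omega)x|^{t-s}e^s(S_n(\omega)\cdot x)^{-1}\bigr]$, bound the right-hand expectation below by restricting to the event $\bigl\{\bigl|\frac1n\log|S_n(\omega)x|-\alpha(s)\bigr|\le\eps\bigr\}$, whose $\Q_x^s$-probability tends to $1$ by the first half, then take $\frac1n\log$, let $n\to\infty$ and $\eps\to0$ (the left-hand side tends to $\log\kappa(t)$ for $t\in I_\mu$, since $\int|S_n(\omega)x|^t e^t(S_n(\omega)\cdot x)\mu^{\otimes n}(d\omega)=\kappa^n(t)e^t(x)$ and $e^t$ is bounded between positive constants). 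In particular $\kappa'(s^-)/\kappa(s)=(\log\kappa)'(s^-)\le\alpha(s)<\infty$, so the left derivative is finite, and $\alpha(s)\le(\log\kappa)'(s^+)$. For the reverse inequality and the regularity: on the interior of $I_\mu$, Theorem \ref{wthm2.2} makes $\kappa(s)$ a simple isolated leading eigenvalue of $P^s$ with a spectral gap on the H\"older space, $s\mapsto P^s$ is real-analytic there, so $s\mapsto\kappa(s)$ is, and the eigenprojection formula for $\kappa'(s)$ — after the identification $\pi^s=e^s\nu^s/\nu^s(e^s)$ — reduces exactly to $\kappa'(s)/\kappa(s)=\alpha(s)$ of \eqref{renstar}; thus $\kappa$ is $C^1$ with $\kappa'=\kappa\,\alpha$ on the interior. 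Since $s\mapsto e^s$ (uniformly) and $s\mapsto\nu^s$ (weakly) are continuous by Theorem \ref{wthm2.2}, dominated convergence (with the integrability hypotheses furnishing the dominant) shows $\alpha$ is continuous on $I_\mu$, whence $\kappa'(s^-)=\kappa(s)\lim_{t\uparrow s}\alpha(t)=\kappa(s)\alpha(s)$; this gives $\alpha(s)=\kappa'(s^-)/\kappa(s)$ throughout $I_\mu$ and the continuity of $\kappa'$.

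Finally, when $N=c$, $\chi>1$ and $\kappa(\chi)=\kappa(1)$: $\log\kappa$ is strictly convex (Theorem \ref{wthm2.2}) and takes equal values at $1$ and $\chi$, so strict convexity gives $(\log\kappa)'(\chi^-)>\frac{\log\kappa(\chi)-\log\kappa(1)}{\chi-1}=0$, hence $\kappa'(\chi^-)=\kappa(\chi)(\log\kappa)'(\chi^-)>0$; finiteness was obtained above, and in fact only the subgradient inequality plus strict convexity are needed here, so $\kappa'(\chi^-)\in(0,\infty)$. The main obstacle is twofold: the upgrade in the first half — from ``$\Q^s$-a.e.'' to every starting direction and to $|S_n(\omega)|$ rather than $|S_n(\omega)x|$ — which rests on the recurrence of the product to $S^0$, on the uniform comparison of the images $S_m(\omega)\cdot x$ inside a compact subset of $C_1^0$, and on Lemma \ref{wlem2.6}; and matching $\alpha(s)$ with the \emph{one-sided} derivative at the right endpoint of $I_\mu$, where (unlike at interior points, where it is soft) the continuity of $\alpha$ up to the boundary is delicate, since the weight $|a|^s$ behaves in opposite ways for $|a|<1$ and $|a|>1$; there the subgradient inequality is what secures finiteness.
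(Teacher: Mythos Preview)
Your proposal is correct. The first half is essentially the paper's own argument: Birkhoff on the skew product $(C_1\times\Omega,\hat\theta,\hat\Q^s)$, then the upgrade to every starting direction via recurrence of the product to $S^0$ and domination $\Q_x^s\le d_1\Q^s$. What you describe informally is precisely the content of Lemmas \ref{wlem2.5} and \ref{wlem2.7}. One small difference: for $\frac1n\log|S_n(\omega)|$ the paper applies the subadditive ergodic theorem directly, whereas you compare with $|S_n(\omega)y_0|$ for a fixed $y_0\in C_1^0$ via Lemma \ref{wlem2.6}; both work.

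The second half is where you genuinely diverge from the paper. The paper proves the integral formula $\log\kappa(s)=\int_0^s\alpha(t)\,dt$ by an explicit computation: setting $v_n(s)=\frac1n\log\int|ax|^s\mu^n(da)$, one has $v_n(s)\to\log\kappa(s)$, and using the $L^1(\Q_x^s)$-convergence of $\frac1n\log|S_nx|$ together with the uniform convergence in Theorem \ref{wthm2.2}, one shows $v_n'(s)\to\alpha(s)$; dominated convergence on $[0,s]$ (via convexity of $v_n$) then yields the integral formula. Your route is different: the change-of-measure plus restriction to $\{|\frac1n\log|S_nx|-\alpha(s)|\le\eps\}$ gives directly that $\alpha(s)$ is a subgradient of $\log\kappa$ at $s$, hence $(\log\kappa)'(s^-)\le\alpha(s)\le(\log\kappa)'(s^+)$, and then continuity of $\alpha$ (from continuity of $e^s$, $\pi^s$ and the dominant $|a|^s(|\log|a||+|\log\iota(a)|)$) forces equality. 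This is a clean alternative and uses only convergence in probability, not the $L^1$-convergence the paper invokes. Your appeal to a spectral gap and real-analyticity of $s\mapsto P^s$ is, however, both unjustified within the paper and unnecessary: the subgradient inequality plus continuity of $\alpha$ already suffices, as you yourself note. Drop that detour. (The stray ``$N=c$'' in the last paragraph is irrelevant to the statement being proved and should also be removed.)
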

Let us mention that by the derivative at an end of a closed interval we will mean half-derivative.

\medskip

The last theorem was proved in \cite{GLnotes} (Theorem 3.11), however in the cone situation its proof is much simpler, therefore we provide below a complete proof.

We start with the following lemma.
\begin{lem}
\label{wlem2.5}
There exists $d_1>0$, such that for any $x\in C_1$: $\Q_x^s\le d_1\Q^s$.
\end{lem}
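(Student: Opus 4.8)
The plan is to prove the domination $\Q_x^s \le d_1 \Q^s$ by controlling, uniformly in $x \in C_1$, the densities $q_n^s(x,\omega)$ that define the projective system. Recall from \eqref{qn} that $q_n^s(x,\omega) = \frac{|S_n(\omega)x|^s}{\kappa^n(s)}\frac{e^s(S_n(\omega)\cdot x)}{e^s(x)}$, and that $\Q^s = \int \Q_y^s\,\pi^s(dy)$. Since on each cylinder of rank $n$ the measure $\Q_x^s$ has density $q_n^s(x,\cdot)$ with respect to $\mu^{\otimes n}$ (pushed to $\Omega$), and $\Q^s$ has density $\int q_n^s(y,\cdot)\,\pi^s(dy)$, it suffices to exhibit a constant $d_1>0$, independent of $n$ and $\omega$, with
\[
q_n^s(x,\omega) \le d_1 \int_{C_1} q_n^s(y,\omega)\,\pi^s(dy).
\]
Once this pointwise bound on densities is established on every rank-$n$ cylinder, a monotone class / Carathéodory argument upgrades it to the full $\sigma$-field on $\Omega$, giving $\Q_x^s \le d_1\Q^s$.

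First I would reduce the ratio $q_n^s(x,\omega)/q_n^s(y,\omega)$ to something uniformly bounded. Writing it out,
\[
\frac{q_n^s(x,\omega)}{q_n^s(y,\omega)} = \left(\frac{|S_n(\omega)x|}{|S_n(\omega)y|}\right)^{\!s}\cdot \frac{e^s(S_n\cdot x)}{e^s(S_n\cdot y)}\cdot\frac{e^s(y)}{e^s(x)}.
\]
By Theorem~\ref{wthm2.2} the eigenfunction $e^s$ is continuous and strictly positive on the compact set $C_1$, so $e^s(y)/e^s(x)$ and $e^s(S_n\cdot x)/e^s(S_n\cdot y)$ are bounded above by $|e^s|_\infty/\inf_{C_1}e^s =: D$. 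For the remaining factor I would use that $C_1$ is compact and $C^0$ nonempty together with condition $\C$; the key point is to compare $|S_n(\omega)x|$ and $|S_n(\omega)y|$ for $x,y\in C_1$. This is exactly where the cone geometry helps: for $x\in C_1^0$, Lemma~\ref{wlem2.6} gives $\tau(x)>0$ with $|ax|\ge \tau(x)|a|$ for all $a\in S_+$, while trivially $|ay|\le |a|$ for $y\in C_1$; hence $|S_n(\omega)y|/|S_n(\omega)x| \le 1/\tau(x)$ for $x$ in the interior.

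The main obstacle is that the stationary measure $\pi^s$ need not charge $C_1^0$ — its support is $\Lambda([\supp\mu])$, which may lie on the boundary — and, symmetrically, the point $x$ in $\Q_x^s$ ranges over all of $C_1$, including boundary points where $\tau(x)$ may vanish. To get around this I would not compare $x$ to a single $y$ but integrate: the claim is that $\int_{C_1} |S_n(\omega)y|^s\,\pi^s(dy)$ is comparable to $|S_n(\omega)|^s$ up to a constant. The upper bound $\int |S_n y|^s\pi^s(dy)\le |S_n|^s$ is immediate. For the lower bound, apply condition $\C$(b): there is $a_0\in[\supp\mu]\cap S^0$, and since $\supp\pi^s = \Lambda([\supp\mu])$ is the $\Gamma$-minimal set, $\pi^s$ charges the image $a_0\cdot C_1$, a compact subset of $C_1^0$. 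Restricting the integral to a neighbourhood $K\subset C_1^0$ of positive $\pi^s$-mass, $\inf_{y\in K}\tau(y) =: \tau_0 >0$ by Lemma~\ref{wlem2.6}, so
\[
\int_{C_1}|S_n(\omega)y|^s\,\pi^s(dy) \ge \pi^s(K)\,\tau_0^s\,|S_n(\omega)|^s.
\]
Combining with $|S_n(\omega)x|\le |S_n(\omega)|$ (for any $x\in C_1$) and the bound $D$ on the $e^s$-ratios yields $q_n^s(x,\omega)\le d_1 \int q_n^s(y,\omega)\pi^s(dy)$ with $d_1 = D^2/(\pi^s(K)\tau_0^s\,\kappa^n(s))\cdot\kappa^n(s)$; the $\kappa^n(s)$ factors cancel since they appear identically in numerator and denominator, so $d_1$ is a genuine constant. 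Finally, one passes from the cylinder-by-cylinder inequality to $\Q_x^s\le d_1\Q^s$ on $\mathcal B(\Omega)$ by the standard extension argument, using that both measures are the projective limits of their cylinder restrictions.
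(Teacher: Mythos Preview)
Your proof is correct and follows the same strategy as the paper: reduce to the density bound $q_n^s(x,\omega)\le d_1\int q_n^s(y,\omega)\,\pi^s(dy)$, control the $e^s$-ratios by positivity and continuity on the compact $C_1$, and establish $|a|^s\le C\int|ay|^s\,\pi^s(dy)$ uniformly in $a\in S_+$. The only difference is in this last step: the paper obtains it in one line from properness of $\pi^s$ (for each $a\ne 0$, $\ker a$ is a proper subspace, hence $\pi^s$-null, so $\int|ay|^s\pi^s(dy)>0$; compactness of $\overline{S_+^1}$ then gives the uniform constant), whereas you route through Lemma~\ref{wlem2.6} after arguing that $\pi^s$ charges a compact $K\subset C_1^0$. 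Both work; the paper's is slightly slicker.

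One small wording fix: the assertion ``$\pi^s$ charges the image $a_0\cdot C_1$'' is not quite the right intermediate claim (that set need not be open in $C_1$, so intersecting the support does not immediately give positive mass). What you actually need, and what your next sentence uses, is simply that $\supp\pi^s$ meets the open set $C_1^0$ --- e.g.\ because $a_0^+\in\Lambda([\supp\mu])\cap C_1^0$, or because $a_0\cdot\Lambda\subset\Lambda\cap C_1^0$ by $\Gamma$-invariance --- so $\pi^s(C_1^0)>0$ and a compact $K\subset C_1^0$ of positive mass can be chosen.
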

\begin{proof}
Recalling $D = \sup \{ \es(x) / \es(y) \, : \, x,y \in C_1 \}  < \infty$ and Corollary \ref{cor:estESn},  we have for any $a\in S$:
$$q_n^s(x,a) = \frac 1{\kappa^n(s)} \frac{e^s(a\cdot x)}{e^s(x)} |ax|^s \le \frac D{\kappa^n(s)}|a|^s \le \frac{D^2}{d_s \kappa^n(s)} \int_{C_1} \abs{ay}^s \pi^s(dy) \le d_1 \int_{C_1} q_n^s(y,a) \pi^s(dy)$$
Let $\psi$ be a nonnegative function on $\Omega$ depending of $n$ first coordinates.
Then
\begin{eqnarray*}
\Q_x^s(\psi) &=& \int_{C_1} \psi(\omega) q_n^s(x,S_n(\omega))\mu^{\otimes n}(d\omega)\\
&\le &d_1\int_{\Omega} \int_{C_1} \psi(\omega) q_n^s(y, S_n(\omega))\pi^s(dy)\mu^{\otimes n}(d\omega)\\
&=& d_1 \Q^s(\psi).
\end{eqnarray*}
In view of the arbitrariness of $n$ and $\psi$ the conclusion follows.
\end{proof}

\begin{lem}
\label{wlem2.7}
For any $x\in C_1$ we have
$$\inf_{n\in\N}\frac{|S_n x|}{\norm{S_n}}>0,\qquad \Q^s \mbox{ a.e.} $$
\end{lem}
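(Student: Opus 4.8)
The plan is to exploit Lemma \ref{wlem2.6}: the function $\tau(y)=\inf_{a\in S_+}\frac{|ay|}{|a|}$ is strictly positive on $C^0$ and bounded below on compact subsets of $C^0$. Fix $x\in C_1$. If $x\in C^0$ the statement is immediate, since $\frac{|S_n x|}{|S_n|}\ge \tau(x)>0$ for every $n$, with no need for the a.e.\ qualifier. The point is therefore the case $x\in C_1\setminus C^0$, where $\tau(x)$ could be $0$; here we must use the random dynamics to push $x$ into the interior after one step and then stay there.

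First I would observe that, because $[\supp\mu]$ satisfies condition $\C$, in particular $\Gamma\cap S^0\neq\emptyset$, and more precisely $\mu^N$ charges $S^0$ for some $N$ (as in the proof of Lemma \ref{wcor2.3}); equivalently, under $\Q^s$ the event that $S_N(\omega)\in S^0$ for some finite $N$ has positive probability, and by ergodicity of $\Q^s$ (the shift $\theta$ is $\Q^s$-ergodic) it has full measure. So $\Q^s$ a.e.\ there is a finite random time $N=N(\omega)$ with $a_N\cdots a_1\in S^0$, hence $S_N\cdot x\in C^0$. Set $z=z(\omega)=S_N\cdot x\in C^0$ and $\ell=\ell(\omega)=|S_N x|>0$. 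For $n>N$ write $S_n = (a_n\cdots a_{N+1})S_N$; since $a_n\cdots a_{N+1}\in S_+$ and $z\in C^0$,
$$
\frac{|S_n x|}{|S_n|}=\frac{|(a_n\cdots a_{N+1}) (S_N x)|}{|S_n|}\ge \frac{\ell\,\tau(z)\,|a_n\cdots a_{N+1}|}{|S_n|}\ge \ell\,\tau(z)\,\frac{|a_n\cdots a_{N+1}|}{|a_n\cdots a_{N+1}|\,|S_N|}=\frac{\ell\,\tau(z)}{|S_N|},
$$
using submultiplicativity $|S_n|\le |a_n\cdots a_{N+1}|\,|S_N|$. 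The right-hand side is a strictly positive random constant not depending on $n$. Combining with the finitely many terms $n\le N$ (each of which is positive since $a\in S_+$ forces $|S_n x|>0$ whenever $|S_n|>0$, and $S_n\ne 0$ $\Q^s$-a.e.), we conclude $\inf_{n}\frac{|S_n x|}{|S_n|}>0$, $\Q^s$ a.e.

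The main obstacle is making rigorous that $\Q^s$ a.e.\ there is a finite time with $S_N(\omega)\in S^0$. The density of $\Q^s$ with respect to $\mu^{\otimes n}$ on the first $n$ coordinates is $q_n^s(x,\cdot)$, which is everywhere positive (Theorem \ref{wthm2.4}, $e^s>0$), so $\Q^s$ and $\mu^{\otimes\N^*}$ are mutually absolutely continuous on each finite coordinate block; hence it suffices to prove the claim for $\mu^{\otimes\N^*}$. There, independence of the blocks and $\mu^N(S^0)>0$ give, by Borel--Cantelli (second lemma), that infinitely many disjoint blocks $(a_{(k-1)N+1},\dots,a_{kN})$ lie in $(S^0)^{\times N}$, and a single such block suffices since $S^0$ is a semigroup ideal: if one factor of the product is in $S^0$ then the whole product is in $S^0$. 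This yields a finite $N(\omega)$ a.e., completing the argument.
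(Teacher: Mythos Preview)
Your overall strategy coincides with the paper's: find a finite random time $T$ with $S_T(\omega)\in S^0$, then use Lemma~\ref{wlem2.6} to obtain the uniform lower bound $\frac{|S_n x|}{|S_n|}\ge \frac{|S_T x|\,\tau(S_T\cdot x)}{|S_T|}$ for all $n\ge T$, and handle the finitely many terms $n<T$ separately. That estimate is exactly the one the paper writes down.

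The only delicate point is showing $T<\infty$ $\Q^s$-a.e., and here both of your justifications need repair.

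Your Borel--Cantelli argument is fine under the product measure $\mu^{\otimes\N^*}$, but the transfer to $\Q^s$ via ``mutually absolutely continuous on each finite coordinate block'' is invalid. The density $q_n^s(x,S_n(\omega))$ is unbounded in $n$, and the bad event $\bigcap_N\{S_N\notin S^0\}$ lies in no finite-coordinate $\sigma$-field, so knowing it is $\mu^{\otimes\N^*}$-null says nothing about its $\Q^s$-measure. (Think of Kakutani's dichotomy: product measures that are equivalent on every finite block are typically mutually singular on the infinite product.)

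Your ergodicity argument, on the other hand, can be completed, but not as stated: the event $A=\{\exists N:\ S_N\in S^0\}$ is not $\theta$-invariant, so ``positive probability $+$ ergodic $\Rightarrow$ full measure'' is not automatic. What saves it is the right-ideal property $S^0 S_+\subset S^0$ (which you invoke later anyway): if $S_N(\theta\omega)\in S^0$ then $S_{N+1}(\omega)=S_N(\theta\omega)\,a_1(\omega)\in S^0$, so $\theta^{-1}A\subset A$. Since $\Q^s$ is $\theta$-invariant this forces $\Q^s(A\setminus\theta^{-1}A)=0$, i.e.\ $A$ is almost invariant, and then ergodicity gives $\Q^s(A)\in\{0,1\}$; positivity follows from equivalence of $\Q_x^s$ and $\mu^{\otimes k}$ on the first $k$ coordinates.

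The paper avoids this $0$--$1$ manoeuvre altogether: it fixes $k$ with $\Q^s(\Omega')>0$ for $\Omega'=\{S_k\in S^0\}$, applies Birkhoff's theorem to the indicator of $\Omega'$ to get a finite first return time $T'(\omega)=\inf\{n\ge1:\theta^n\omega\in\Omega'\}$ a.e., and then uses $S_{T'+k}(\omega)=S_k(\theta^{T'}\omega)\,S_{T'}(\omega)\in S^0$ to conclude $T\le T'+k<\infty$. This is slightly cleaner, but once you patch the almost-invariance step your route is equivalent.
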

\begin{proof}
We follow arguments of \cite{K1,H}. Condition ($\H$) implies
existence of $k\in\N^*$, and  $a_1,\ldots, a_k\in {\rm supp}\ \mu$
such that their product $a_k\cdots a_1$ belongs to $S^0$.
Since $q_k^s(x,\omega)>0$
we have $\mu^{\otimes k}$ - a.e., $\Q_x^s(S_k(\omega)\in S^0)>0$. Then Lemma \ref{wlem2.5} implies $\Q^s (S_k(\omega)\in S^0)>0$.
Let $\Omega' = \{ \omega \in\Omega; S_k(\omega)\in S^0  \}$,
$T'(\omega) =\inf \{ n\ge 1; \theta^n\omega \in \Omega'\}$,
$T(\omega) = \inf\{m\ge 0; S_m(\omega)\in S^0\}$. Since $S_{n+k}(\omega) = S_k(\theta^n\omega) S_n(\omega)$,
we have $T(\omega)\le k+ T'(\omega)$. Since $\Q^s(\Omega')>0$ and $\theta$ is ergodic with
respect to the invariant measure $\Q^s$, Birkhoff's theorem implies $T'(\omega)<\8$ a.e., hence $T(\omega)<\8$
$\Q^s$ - a.e. If $n\ge T$, we can write
$$S_n(\omega)x = A_n\ldots A_{T+1}A_T\ldots A_1x
$$ hence,
$$
|S_n(\omega)x|\ge \norm{A_n\ldots A_{T+1}}\, \tau(A_T\ldots A_1 x) \ge \norm{S_n(\omega)}\,\frac{\tau(A_T\ldots A_1x)}{\norm{A_T\ldots A_1}},
$$
i.e. by Lemma \ref{lem:snxsn},  since $A_T\ldots A_1x\in C^0$,
$$
\inf_{n \ge T} \frac{|S_n(\omega)x|}{\norm{S_n(\omega)}} \ge \frac{\tau(A_T\ldots A_1x)}{\norm{A_T\ldots A_1}} > 0, \quad \Q^s \mbox{ a.e.}
$$ On the other hand
$$
\inf_{0\le n<T}\frac{|S_n(\omega)x|}{\norm{S_n(\omega)}} >0.
$$ It follows
$$
\inf_{n\in\N}\frac{|S_n(\omega)x|}{\norm{S_n(\omega)}} >0,\qquad \Q^s \mbox{ a.e.}
$$
\end{proof}
\begin{proof}[Proof of Theorem \ref{wthm2.4}]
We consider the space $\wh \Omega = C_1\times \Omega$ and the extended shift $\wh \theta$:
$$
\wh \theta (x,\omega) = (a_1\cdot x,\theta \omega).
$$
Recall that $\wh \Omega$ can be identified with
the space of paths of the Markov chain defined by $Q^s$ and $\pi^s$
is its unique stationary measure. Thus,
 the probability measure
$$\wh\Q^s = \int_{C_1}\int_{\Omega}(\delta_x\otimes \delta_\omega)\Q_x^s(d\omega)\pi^s(dx)
$$ is $\wh \theta$ - invariant and ergodic. We observe that $f(x,\omega) = \log|a_1(\omega) x|$
satisfies $\log\iota(a)\le f(x,\omega)\le \log|a|$,
hence the $\mu$ - integrability of $f(x,\omega)$. Then Birkhoff's theorem gives
$$
\lim_{n\to\8}\frac 1n \log|S_n(\omega) x| = \lim_{n\to\8}\frac 1n \sum_1^\8 f\circ \wh\theta^k(x,\omega)
=\wh \Q^s(f) = \a(s),\qquad \wh\Q^s \mbox{ a.e.}
$$
On the other hand the subadditive ergodic theorem can be applied to the ergodic system $(\Omega,\theta,\Q^s)$
and the sequence $\log\norm{S_n(\omega)}$:
$$
\lim_{n\to\8}\frac 1n \log\norm{S_n(\omega)} = \a_s,\quad \Q^s \mbox{ a.e.,}
$$ where the convergence is also valid in $L^1(\Q^s)$.

For arbitrary $x\in C_1$ and using Lemma \ref{wlem2.7}
$$
\lim_{n\to\8}\frac 1n \log|S_n(\omega)x| = \lim_{n\to\8} \frac 1n \log \norm{S_n(\omega)}
+\lim_{n\to\8} \frac 1n \log \frac{|S_n(\omega)x|}{\norm{S_n(\omega)}} = \a_s,\quad \Q^s \mbox{ a.e.}
$$ Lemma \ref{wlem2.5} gives $\Q_x^s\le d_1 \Q^s$, hence the above convergence is also valid
$\Q_x^s$ a.e., hence $\wh \Q^s$ a.e. Then the above convergences imply $\a_s = \a(s)$, i.e.
the first part of Theorem \ref{wthm2.4}.

In order to prove the second part we show $$\log \kappa(s) = \int_0^s \a(t)dt.
$$
We consider $$v_n(s) = \frac 1n \log \bigg(\int |ax|^s \mu^n(da)\bigg).$$
We observe that
$$
\frac 1{\kappa^n(s)}\int|ax|^s\mu^n(da) = e^s(x)(Q^s)^n( 1/e^s)(x)
$$ is bounded from below and from above. By Theorem \ref{thm:Qs ergodic} this expression has a limit and for any $x\in C_1$
$$
\lim_{n\to\8} \frac 1{\kappa^n(s)}\int|ax|^s\mu^n(da) = e^s(x)\pi^s( 1/e^s)
$$
Thus, taking the logarithm of both sides and dividing by $n$, we obtain
$$
\log\kappa(s) = \lim_{n\to\8} v_n(s).$$ Notice that the last limit does not depend on $x$.
We write
$$
v_n'(s) = \frac{\frac 1n\frac 1{\kappa^n(s)}\int|ax|^s \log|ax|\mu^n(da)
}{\frac 1{\kappa^n(s)} \int |ax|^s \mu^n(da)
}
$$ and we observe that
$$
\frac 1{\kappa^n(s)} \int|ax|^s \log|ax|\mu^n(da) = e^s(x) \E_x^s\bigg[
\frac 1{e^s(S_n\cdot x)  \log|S_n x|}
\bigg].
$$ From the $L^1(\Q_x^s)$ convergence above we have
$$
\lim_{n\to\8} \int \bigg| \frac 1n \log|S_n(\omega)x| - \a(s)
\bigg| \Q_x^s(d\omega) =0.
$$ Applying again Proposition \ref{wthm2.2} to $\phi\in C(C_1)$:
$$
\lim_{n\to\8} \frac 1n \E_x^s \Big[ \phi(S_n\cdot x)\log|S_n x|\Big] = \a(s)\pi^s(\phi),
$$
hence taking $\phi = \frac 1{e^s}$:
$$
\lim_{n\to\8} \frac 1{n\kappa^n(s)}\int|ax|^s\log|ax| \mu^n(da)
= e^s(x) \a(s) \pi^s(1/e^s).
$$ Therefore, from the expression of $v'_n(s)$
$$
\lim_{n\to\8} v'_n(s) = \a(s).
$$ Clearly $v_n(t)$
is convex and has a continuous derivative on $[0,s]$, hence $v'_n(0) \le v'_n(t) \le v'_n(s) $
and $v_n(s) = \int_0^s v'_n(t)dt$. By dominated convergence we conclude
$$
\log \kappa(s) = \lim_{n\to\8} v_n(s) = \int_0^s \a(t)dt.
$$
The expression of $\a(s)$, the continuity in $s$ of $q^s(x,a)$, $\pi^s$ and the
inequality $\log\iota(a)\le \log |ax|\le \log |a|$ allow to conclude that the left
derivative of $\log \kappa(s)$ is equal to $\a(s)$. Since $\a(s)$
is continuous on $I_{\mu}$, we get also that $\kappa(s)$ has a continuous derivative on $I_{\mu}$,
and $\frac{\kappa'(s)}{\kappa(s)}=\a(s)$, if $s\in[0,s_\8)$. The convexity of $\log \kappa(s)$
gives for $\kappa(\chi)=1$, $\kappa'(\chi)>0$.
 \end{proof}

\section{Kesten's renewal theorem}
\label{sec: renewal}

The
 main tool we will use to prove Theorem \ref{asymptotic} is
Kesten's renewal theorem \cite{K2}. Here we are going to  state it precisely and
check that its assumptions are satisfied in our settings. To avoid introducing some new notation we formulate here all the details in the case the state space $S$ in \cite{K2} is the compact subset $C_1^*$ of $S^{d-1}$ endowed with the metric $d(x,y) = |x-y|$ and the probability space $\Omega$ is endowed with the probability measure $\Q_x^{\chi,*}$.
Recall that due to the symmetry of $(\H)$ w.r.t. $a \mapsto a^*$, all results up to now carry over to the dual counterparts upon replacing $C$ with $C^*$, $e^\chi$ with $e_*^\chi$, $\mu$ with $\mu^*$ etc.
First we introduce some definitions.

\medskip

Fix $x\in C_1^*$ and define $X_0(\omega)=x$. Given $\omega\in \Omega$ we consider its trajectory $S_n(\o)x$ writing it in radial coordinates. Thus we define
\begin{eqnarray*}
X_n(\o)&=&a_n(\o)\cdot X_{n-1}(\o)=S_n(\o)\cdot x,\\
V_n(\o) &=& \log|S_n(\o)x| = \sum_{i=1}^n U_i(\o),
\end{eqnarray*}
for $ U_i(\o) = \log|a_i(\o) X_{i-1}(\o)|$.

We say that a  function $g:C_1^*\times \R \to \R$ is directly Riemann integrable (dRi) if
$g$ is jointly continuous and satisfies
\begin{equation}
\label{dri}
 \sum_{l=-\8}^\8  \sup \Big\{  |h(x,t)|:\; x\in C_1^*, t\in [l,l+1] \Big\}<\8.
\end{equation}
Indeed the definition given in \cite{K2} is formulated in terms of the measure $\Q_x^{\chi,*}$ and on the first sight both seems to be unrelated. It turns out that the above definition is stronger, but since it is  close to the classical definition (see \cite{feller}) is much easier to handle in applications. We postpone to Appendix \ref{app: renewal} further discussions on this condition.

\begin{thm}[\cite{K2}]
\label{kesten}
Assume  the following conditions are satisfied:
\begin{itemize}
\item {\bf Condition I.1}
There exists $\pi^{\chi}_*\in M^1(C_1^*)$ such that $\pi^{\chi}_* Q^{\chi,*}=\pi^{\chi}_*$ and
for every open set $O$ with $\pi(O)>0$,
$\Q^{\chi,*}_x[X_n\in O \mbox{ for some } n]=1$ for every $x\in C_1^*$.
\item {\bf Condition I.2} Let
 $F(dt|x,y)$ be the  conditional law of $U_1$, given $X_0=x$, $X_1=y$, i.e.
$\Q^{\chi,*}_x\big[ X_1\in A, U_1\in B \big] = \int_A\int_B F(dt|x,y)Q^{\chi,*}(x,dy)$.
Then
$$
\int|t|F(dt|x,y) Q^{\chi,*}(x,dy)\pi^{\chi}_*(dx)<\8
$$ and for all $x\in C_1^*$, $ \Q^{\chi,*}_x$ - a.e.:
$$
\lim_{n\to\8} \frac{V_n}{n} = \a = \int t F(dt|x,y)Q^{\chi,*}(x,dy)\pi^{\chi}_*(dx)>0.
$$
\item {\bf Condition I.3}
 There exists a sequence $\{\z_i\}\subset\R$ such that the group
generated by $\z_i$ is dense in $\R$ and such that for each $\z_i$ and
$\l>0$ there exists $y=y(i,\l)\in C_1^*$ with the following property: for each
$\eps>0$ there exists $A\in {\mathcal B}(C_1^*)$ with $\pi^{\chi}_*(A)>0$ and $m_1,m_2\in N$,
$\tau\in\R$ such that for any $x\in A$:
\begin{eqnarray}
\label{d1} &\Q^{\chi,*}_x\big\{ d(X_{m_1},y)<\eps, |V_{m_1} - \tau|\le \l \big\} >0\\
\label{d2} &\Q^{\chi,*}_x\big\{ d(X_{m_2},y)<\eps, |V_{m_2} - \tau-\z_i|\le \l \big\} >0
\end{eqnarray}
\item {\bf Condition I.4}
For each  fixed $x\in C_1^*$, $\eps>0$ there exists $r_0=r_0(x,\eps)$ such that all real valued
$f\in{\mathcal B}((C_1^*\times \R)^{\N})$ and for all $y$ with $d(x,y)<r_0$ one has:
\begin{eqnarray*}
\E_y^{\chi,*} f(X_0,V_0,X_1,V_1,\ldots)&\le&
\E_x^{\chi,*} f^{\eps}(X_0,V_0,X_1,V_1,\ldots) +\eps|f|_\8,\\
\E_x^{\chi,*} f(X_0,V_0,X_1,V_1,\ldots)&\le&
\E_y^{\chi,*} f^{\eps}(X_0,V_0,X_1,V_1,\ldots) +\eps|f|_\8,
\end{eqnarray*}
where $$f^{\eps}(x_0,v_0,x_1,v_1,\ldots)
=\sup\big\{ f(y_0,u_0,y_1,u_1,\ldots):\;
|x_i-y_i|+|v_i-u_i|<\eps \mbox{ if } i\in \N
\big\}$$
\end{itemize}
If a function $g:C_1^*\times\R\mapsto\R$ is 
directly Riemann integrable,
then for every $x\in C_1^*$
$$
\lim_{t\to\8} \E_x^{\chi,*} \bigg[ \sum_{n=0}^\8 g(X_n,t-V_n)\bigg] = \frac 1{\a}\int_{C_1^*}\pi^{\chi}_*(dy)\int_\R g(y,s)ds.
$$
\end{thm}
\begin{prop}
Under hypotheses of Theorem \eqref{asymptotic}, conditions I.1 - I.4 are satisfied  by the measures  $\Q_x^{\chi,*}$.
\end{prop}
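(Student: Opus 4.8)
The plan is to verify Conditions I.1--I.4 of Theorem~\ref{kesten} for the Markov chain $X_n=S_n\cdot x$ on $C_1$ governed by the measures $\Q_x^{\chi}$. By the identity $q_n^{\chi}(x,S_n)=\prod_{k=1}^n q^{\chi}(X_{k-1},a_k)$ (which follows from \eqref{qn} and $|a_n\cdots a_1 x|=\prod_{k=1}^n|a_k X_{k-1}|$), under $\Q_x^{\chi}$ the process $(X_n)$ is Markov with one--step kernel $Q^{\chi}(x,dy)$ given by the operator $Q^{\chi}\phi=\frac1{\kappa(\chi)e^{\chi}}P^{\chi}(\phi e^{\chi})$ from the proof of Theorem~\ref{wthm2.2}; here $V_n=\log|S_n x|$, $V_0=0$, $U_i=\log|a_i X_{i-1}|$. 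Throughout we use $\kappa(\chi)=\kappa(1)=1/c$, so by strict convexity of $\kappa$ on $I_\mu$ (Theorems~\ref{wthm2.2},~\ref{wthm2.4}) we have $\kappa'(\chi^-)\in(0,\8)$ and $\a:=\a(\chi)=\kappa'(\chi^-)/\kappa(\chi)>0$.

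\textbf{Conditions I.1 and I.2.} Existence and uniqueness of $\pi:=\pi^{\chi}$ with $\pi^{\chi}Q^{\chi}=\pi^{\chi}$, together with $\supp\pi^{\chi}=\Lambda([\supp\mu])$, are part of Theorem~\ref{wthm2.2}. If $O$ is open with $\pi^{\chi}(O)>0$, then $O$ meets $\Lambda([\supp\mu])$, the unique $\Gamma$--minimal subset of $C_1$ (Proposition~\ref{wprop2.1}); hence for every $x$ some finite product of elements of $\supp\mu$ carries $x$ into $O$, and since $q_n^{\chi}(x,\cdot)>0$ on $(\supp\mu)^{\otimes n}$, a compactness argument gives a uniform lower bound $\inf_x\Q_x^{\chi}[X_m\in O\ \text{some}\ m\le n_0]\ge\delta>0$, whence $\Q_x^{\chi}[X_n\in O\ \text{for some}\ n]=1$ as in the proof of Lemma~\ref{wlem2.7}; this is Condition I.1. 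For Condition I.2, the $t$--marginal of $F(dt|x,y)Q^{\chi}(x,dy)\pi^{\chi}(dx)$ is the law of $U_1=\log|a_1 X_0|$ with $X_0\sim\pi^{\chi}$, so
$$
\int|t|\,F(dt|x,y)Q^{\chi}(x,dy)\pi^{\chi}(dx)=\int|\log|ax||\,q^{\chi}(x,a)\,\mu(da)\,\pi^{\chi}(dx),
$$
which is finite since $\log\iota(a)\le\log|ax|\le\log|a|$ and, by the hypotheses of Theorem~\ref{asymptotic}, $|a|^{\chi}\log|a|$ and $|a|^{\chi}\log\iota(a)$ are $\mu$--integrable (the weight $|ax|^{\chi}$ built into $q^{\chi}$ is dominated by $|a|^{\chi}$). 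The convergence $V_n/n\to\a$, $\Q_x^{\chi}$--a.e.\ for every $x$, with $\a=\int\log|ax|\,q^{\chi}(x,a)\,\pi^{\chi}(dx)\,\mu(da)=\a(\chi)>0$ as in \eqref{renstar}, is exactly the first part of Theorem~\ref{wthm2.4}.

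\textbf{Condition I.3.} By Proposition~\ref{wprop2.2} the set $\{\log\lambda_a:a\in\Gamma\cap S^0\}$ generates a dense subgroup of $\R$; choose $b_i\in\Gamma\cap S^0$ with $\zeta_i=\log\lambda_{b_i}$. Given $i$ and $\lambda>0$, set $y=y(i,\lambda):=b_i^+\in C_1^0$, the dominant eigendirection of $b_i$. Since $b_i\in S^0$ it strictly contracts the Birkhoff distance, so $b_i^N\cdot z\to y$ as $N\to\8$ uniformly for $z$ in a neighbourhood of a fixed $z_0\in\Lambda([\supp\mu])\cap C_1^0$, with $\log|b_i^N z|=N\zeta_i+O(1)$. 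Let $A$ be a small ball around such a $z_0$, so $\pi^{\chi}(A)>0$. Approximating $b_i^N$ and $b_i^{N+1}$ by words in $\supp\mu$ and using positivity and continuity of the densities $q_m^{\chi}$, one produces, for $N$ large, lengths $m_1<m_2$ and a level $\tau$ such that for every $x\in A$ the $\Q_x^{\chi}$--probability of ``the first $m_1$ factors carry $x$ into the $\eps$--ball of $y$ with $V_{m_1}$ within $\lambda$ of $\tau$'' is positive, and likewise with $m_2,\tau+\zeta_i$ in place of $m_1,\tau$ (one extra near--copy of $b_i$ keeps the chain near the direction $y$ and adds $\approx\zeta_i$ to $V$). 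These are \eqref{d1}--\eqref{d2}. This cone analogue of the construction in \cite{GLnotes,K2} is the main obstacle.

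\textbf{Condition I.4.} This is a continuity--in--the--starting--point property of the law of the path $(X_n,V_n)_{n\ge0}$, in the $\eps$--enlargement sense stated. It follows, as in \cite{GLnotes}, by coupling the chains started from nearby points $x,y$ through a common driving sequence $\o$: the Birkhoff contraction (nonexpanding under $S$, strictly contracting whenever the partial product meets $S^0$, which occurs $\Q^{\chi}$--a.e.\ infinitely often by Lemma~\ref{wlem2.7}) controls the position coordinate, the estimates of Lemmas~\ref{wlem2.6},~\ref{wlem2.7} control the log--norm coordinate, and the continuity and strict positivity of $e^{\chi}$ and $q^{\chi}$ (Theorem~\ref{wthm2.2}) make the Radon--Nikodym correction $\frac{d\Q_y^{\chi}}{d\Q_x^{\chi}}|_{\F_n}=\frac{|S_n y|^{\chi}}{|S_n x|^{\chi}}\,\frac{e^{\chi}(S_n\cdot y)}{e^{\chi}(S_n\cdot x)}\,\frac{e^{\chi}(x)}{e^{\chi}(y)}$ uniformly bounded and close to $1$ for $d(x,y)$ small. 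Pushing a bounded measurable $f$ through the resulting coupling and using the enlargement $f^{\eps}$ gives both inequalities of Condition I.4 with error $\eps|f|_\8$. This step is technical but routine once the contraction estimates of Section~3 are in hand.
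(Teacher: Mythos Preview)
Your verification is essentially correct and follows the paper's overall strategy, but for Condition~I.3 you take a considerably more roundabout route. The paper exploits a simple trick that you miss: it takes $A=B_\eps(v_a)$ to be a small ball around the dominant eigenvector $y=v_a$ itself (which lies in $\Lambda([\supp\mu])=\supp\pi^{\chi}$, so $\pi^{\chi}(A)>0$), sets $\tau=0$, and then \eqref{d1} is satisfied \emph{trivially} with $m_1=0$, since $X_0=x\in B_\eps(y)$ and $V_0=0$. For \eqref{d2} one simply takes $m_2=n$ where $a=U_1\cdots U_n$ with $U_j\in\supp\mu$; for $x\in B_\eps(v_a)$ one has $a\cdot x\in B_{\eps'}(v_a)\subset B_\eps(v_a)$ and $|\log|ax|-\zeta_i|<\lambda$, so a neighbourhood of $(U_1,\ldots,U_n)$ in $(\supp\mu)^n$ does the job. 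Your construction---choosing an auxiliary $z_0$, iterating $b_i^N$ versus $b_i^{N+1}$, then approximating each by long words in $\supp\mu$---works in principle but is unnecessary, and the phrase ``approximating $b_i^N$ and $b_i^{N+1}$ by words in $\supp\mu$ and using positivity and continuity'' leaves the actual choice of $m_1,m_2,\tau$ and the uniformity over $x\in A$ implicit.

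For Condition~I.1 you argue directly that the chain hits any open $O$ with $\pi^{\chi}(O)>0$ via minimality of $\Lambda$ plus a compactness/recurrence step; the paper instead notes that on a compact state space I.1 reduces to uniqueness of the stationary measure, and invokes Breiman's strong law of large numbers for Markov chains with unique stationary measure. Both are valid; the paper's is shorter. Your treatments of I.2 and I.4 match the paper's (the latter is deferred to Lemma~\ref{wlem2.7} and Kesten's original argument in \cite{K1}, p.~217--218, which is what your coupling sketch amounts to).
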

\begin{proof}
{\bf Condition I.1:}
Since $C_1^*$ is compact, I.1 can be replaced by uniqueness
of the $Q^{\chi,*}$-stationary measure $\pi^{\chi}_*$. This follows from
the law of large numbers for Markov chains
with a
unique stationary measure:  for any
continuous function $\phi$ on a compact space (see \cite{B}) we have
$$
\lim_{n\to\8} \frac 1n\sum_{k=0}^\8 \phi(X_k) = \pi^{\chi}(\phi) \quad \Q_x^{\chi,*} \mbox{ a.s.}
$$
for all $x\in C_1^*$.
In our situation Proposition \ref{wthm2.2} implies that this condition is satisfied.

\medskip

{\bf Condition I.2:}
 By \eqref{qn}  and the hypotheses of Theorem \ref{asymptotic} we have
 \begin{eqnarray*}
 \int|t|F(dt|x,y)Q^{\chi,*}(x,dy)\pi^{\chi}_*(dx)&=&
  \int  |\log|ax|| q^{\chi,*}(x,a)\mu^*(da)\pi^{\chi}_*(dx)\\
&\le &
D  \int  |\log|ax|| \norm{a}^{\chi}\mu^*(da)\pi^{\chi}_*(dx)\\
&\le &
D  \int \max\big\{ |\log \norm{a}|, |\log\iota(a)|  \big\} \norm{a}^{\chi}\mu^*(da)\pi^{\chi}_*(dx)\\
&= &
D\,  \Erw{\max\big\{ |\log\norm{A^*}|, |\log\iota(A^*)|  \big\} \norm{A^*}^{\chi}}<\8.
\end{eqnarray*}
The second part  follows immediately from Theorem \ref{wthm2.4}.

\medskip

{\bf Condition I.3:}
The aperiodicity condition \eqref{wprop2.2} states that
$$
\D = \{ \log\l_a:\; a\in [\supp\ \mu^*] \cap S^0\} = \{ \log\l_a:\; a\in [\supp\ \mu] \cap S^0\}
$$
is dense in $\R$.
Let $\{\z_i\}$ be a dense, countable subset of $\D$. Fix $\z_i=\log {\l_a}\in \D$,
with  $a=U_1\ldots U_{n}$, $U_j\in{\rm supp}\  \mu^*$ ($1\le j\le n$) and fix $\lambda>0$.
Let $y=y(\z_i,\d) =  v_a\in {\rm supp}\ \nust=\Lambda(\Gamma^*)$ be the dominant eigenvector of $a$.
If $\eps$ is sufficiently small and $B_{\eps}=\{ x\in C_1^*:\; |x -  v_a|\le \eps \}$,
then $a\cdot B_{\eps}\subset B_{\eps'}$ for $\eps'<\eps$ and
$$
\big| \log \l_a - \log|ax| \big| <\l \quad\mbox{ for } x\in B_{\eps}.
$$
Moreover the statement above  remains valid if we replace $a$ by some $a'$ sufficiently close
to $a$. Therefore if $x\in B_{\eps}$ and $n$ as above:
$$
(\mu^*)^{\otimes n}\Big[ |S_{n}\cdot x- v_a|<\eps, \big| \log|S_{n}x| -\log \l_a  \big| <\l   \Big]>0.
$$
By definition,  $\Q_x^{\chi,*}$ restricted to the first $n$ coordinates  is equivalent to $(\mu^*)^{\otimes n}$,
hence \eqref{d2} is satisfied for $m_2=n$ and $\tau=0$; also
\eqref{d1} is satisfied for $m_1=0$, i.e.  by Lemma \ref{wprop2.1},
$v_a\in C_1^*$. We may take $A=B_{\eps}(v_a)$, then if $x\in B_{\eps}(v_a)$
and $\tau=0$
$$
\Q_x^{\chi,*}\big[ |X_0-v_a|<\eps, V_0\le \l \big] >0.
$$

\medskip

{\bf Condition I.4:}
The proof is a consequence of  the argument given by Kesten (\cite{K1}, p. 217-218) and Lemma \ref{wlem2.7}.
\end{proof}

\section{Proof of Theorem \ref{asymptotic}}
\label{sec: proof}

\subsection{Sketch of the proof}

Let (the law of) $Z$ be a fixed point of $T$. We want to prove that for all $u \in C_1^*$, the ratio
$ r^\chi \P(\is Z u > r) $ tends to a positive limit $D(u)$ as $r \to \infty$. 
Let
$$f(u,r) =  \int_r^\8 s\, \P(\is Z u \in ds),
$$
then by \cite[Lemma 4.3]{L}, this is equivalent to
$$ r^{\chi -1} f(u,r)  = r^{\chi -1} \Erw{\1_{(r,\8)}(\is Z u) \is Z u} \stackrel{r \to \infty}{\longrightarrow} \frac{\chi}{\chi-1}D(u).$$

In fact, we are going to show that the function
$$
G(u,t) = \frac {e^{t(\chi-1)}}{e_*^{\chi}(u)} \ f(u,e^t), \qquad u\in C_1^*, t\in\R
$$
has a limit for $t \to \infty$, which is positive and independent of $u \in C_1^*$. This obviously implies the convergence above, moreover, $D(\cdot)$ will be proportional to $e_*^\chi(\cdot)$.



\medskip

%

To prove existence of the limit we would like to apply Kesten's renewal theorem (Theorem \ref{kesten})
and for this purpose we will express the function $G$ as a potential of some function $g$ defined on $C_1^*\times \R$, i.e. we write (Lemma \ref{lem5.8})
$$ G(u,t) = \sum_{n=0}^\8 \E_u^{\chi,*} g(X_n,t-V_n).$$

Yet we do not know  whether the function $g$ is directly Riemann integrable and continuous, this is why we proceed as in \cite{Go} and introduce a exponential smoothing: Given a function $h$ on $C_1^*\times \R$ we define the smoothed function $\wt h$ by
\begin{equation}
\label{eq: smooth}
\wt h(u,t) = \int_{-\8}^t e^{s-t} h(u,s)ds.
\end{equation}
It holds that
\begin{equation} \wt G(u,t) = \sum_{n=0}^\8 \E_u^{\chi,*}\, \wt g(X_n,t-V_n), \label{gtilde}\end{equation}
and, by \cite[Lemma 9.3]{Go},
$$ \lim_{t \to \infty} G(u,t) = \lim_{t \to \infty} \wt G(u,t)$$
as soon as the right hand side exists.
In order to apply Kesten's renewal theorem to prove the existence of the limit of \eqref{gtilde} for $t \to \infty$, it remains to show that $\wt g$ is directly Riemann integrable and continuous. This is the content of the Lemma \ref{lem5.16} and Lemma \ref{lem5.18}, upon which the proof of Theorem \ref{asymptotic} will be finished. Positivity of the limit follows immediately, as the function $\wt g$ will be nonnegative and not identically zero.

\subsection{$G$ is a potential}

Let $(Z_i)_{i=1}^N$ be i.i.d. copies of the fixed point $Z$, and write $\b_u$ for the law of $\sum_{i=2}^N A_i Z_i$. Observe that $\b_u$ is not a Dirac measure due to independence and the aperiodicity condition $(A)$.

\begin{lem}
\label{lem5.8}
We have
\begin{equation}
\label{potential}
 G(u,t) = \sum_{n=0}^\8 \E_u^{\chi,*} g(X_n,t-V_n),
\end{equation}
for
\begin{equation}\label{gut}
g(u,t) =
\frac {N}{ e^{\chi}_*(u)}\int_{\R^*_+} e^{t({\chi}-1)} \E\Big[
{\bf 1}_{(e^t-y,e^t)}(\is{AZ}u) \is{AZ}u\Big]\b_u(dy),
\end{equation}
 where the non negative function g is not identically zero.
\end{lem}
\begin{proof}
{\sc Step 1, definition of $g$}.
On the set of measurable function on  $C_1^*\times \R$ we define the Markov operator $\Theta$ by
$$
\Theta h(u,t) = \E_u^{{\chi},*}\big[ h(X_1, t- V_1)\big]
$$
and let
\begin{equation}
\label{defg}
g(u,t) = G(u,t) - \Theta G(u,t).
\end{equation}
We will prove below that $g$ satisfies \eqref{gut}. Notice
first that
\begin{equation}
\label{sss}
\begin{split}
\Theta G(u,t) &= \E_u^{{\chi},*}\Big[  G(A_1 \cdot u, t-\log|A_1u|) \Big]\\
&= \E\bigg[\frac 1{e^{\chi}_*(A^*\cdot u)} \frac{e^{t(\chi-1)}}{|A^*u|^{\chi-1}}
f\Big( A^*\cdot u,\frac {e^t}{|A^* u|}  \Big)\ \frac 1{\kappa(\chi)}
|A^* u|^{\chi}\frac{e^{\chi}_*(A^*\cdot u)}{e^{\chi}_*(u)}
\bigg]\\
&= \frac {N e^{t(\chi-1)}}{e^{\chi}_*(u)} \E\Big[
{\bf 1}_{(\frac{e^t}{|A^*u|},\8)}(\is{Z}{A^*\cdot u}) \is{Z}{A^*\cdot u}|A^*u|\Big]\\
&=\frac {N e^{t(\chi-1)}}{e^{\chi}_*(u)} \E\Big[
{\bf 1}_{(e^t,\8)}(\is{AZ}u) \is{AZ}u\Big].
\end{split}
\end{equation}
Therefore, since the law of the pair $(\is Zu, \is{A_iZ_i}u)$ is independent of $i$,
\begin{equation}
\label{ssss}
\begin{split}
 G(u,t) &=  \frac 1{ e^{\chi}_*(u)} e^{t({\chi}-1)} f(u,e^t)\\
 &=  \frac 1{ e^{\chi}_*(u)}e^{t({\chi}-1)}
  \E\Big[{\bf 1}_{(e^t,\8)}(\is{Z}u) \is{Z}u\Big]\\
 &=  \frac {N}{ e^{\chi}_*(u)}e^{t({\chi}-1)}
  \E\bigg[{\bf 1}_{(e^t,\8)}\bigg(\bis{\sum_{i=1}^N A_iZ_i}u\bigg) \is{A_1Z_1}u\bigg]\\
 &=  \frac {N}{ e^{\chi}_*(u)} \int_0^\8 e^{t({\chi}-1)}
  \E\Big[{\bf 1}_{(e^t,\8)}(\is{A_1Z_1}u+y) \is{A_1Z_1}u\Big]\b_u(dy).
  \end{split}
\end{equation}
Combining \eqref{defg} with \eqref{sss} and \eqref{ssss} we obtain \eqref{gut}. Furthermore, since for some $u$, $\beta_u$  is not a Dirac measure, the function $g$ is not identically zero.

\medskip

{\sc Step 2.} Iterating the equation \eqref{defg},  we obtain
\begin{equation}
\label{ren}
 G(u,t) = \Theta^n  G(u,t) + g(u,t) + \Theta g(u,t) +\cdots+ \Theta^{n-1}(g)(u,t).
\end{equation}
Therefore to prove \eqref{potential} it is enough to show that $\Theta^n G$ converges to 0 as $n$ goes to $\8$.
Notice
\begin{eqnarray*}
\Theta^n  G(u,t) &=& \E_u^{{\chi},*}\Big[  G(X_n,t-V_n) \Big]\\
&=& \E\bigg[ G(X_n,t-V_n)\ \frac{e^{\chi}_*(S_n\cdot u)}{{\kappa^n({\chi})}e^{\chi}_*(u)} |S_nu|^{\chi} \bigg]\\
&=& \frac{N^n}{e^{\chi}_*(u)}\E\bigg[  \frac{e^{t(\chi-1)}}{e^{\chi}_*(S_n\cdot u)|S_n u|^{\chi-1}}\ {\bf 1}_{(\frac {e^t}{|S_nu|},\8)}
(\is Z{X_n})\is Z{X_n}\ e^{\chi}_*(S_n\cdot u)|S_nu|^{\chi}\bigg]\\
&=& \frac{N^n e^{t(\chi-1)}}{e^{\chi}_*(u)}\E\bigg[ {\bf 1}_{({e^t},\8)} (\is {S_n u}u)\is {S_n u}u\bigg]
\end{eqnarray*}
Let us estimate the expected value. Choose a positive $p$ satisfying $\max\{1,{\chi}-1/2\}<p<{\chi}$. Then
for  $\eps<\frac 1N$, by independence
of $S_n$ and $Z$ we have
\begin{equation}
\label{ineq}
\begin{split}
 \E \Big[{\bf 1}_{(e^t,\8)}(\is {S_n u}u)\is {S_n u}u \Big]
 &\le \sum_{k=0}^\8 \P\Big[ e^t 2^k \le \is{S_n u}u \le e^t 2^{k+1}\Big]e^t 2^{k+1}\\
 &\le \sum_{k=0}^\8 \P\Big[ \is{S_n u}u \ge e^t 2^{k}\Big]e^t 2^{k+1}\\
 &\le \sum_{k=0}^\8  \frac{e^t 2^{k+1}}{e^{pt} 2^{pk}} \E\big[\norm{S_n}^p\big]\E\big[|Z|^p\big]\\
 &\le D e^{(1-p)t}\Big(\frac 1N -\eps\Big)^n\E\big[|Z|^p\big],
\end{split}
\end{equation}
(we use here the definition of $\kappa$ and Theorem \ref{existence}).
Therefore for fixed $t$ and $u$
$$
\lim_{n\to\8} \Theta^n  G(u,t) \le
\lim_{n\to\8}   D  N^n \Big(\frac 1N -\eps\Big)^n e^{(1-p)t}e^{(\chi-1)t}=0
$$
and letting  $n$ go to infinity in \eqref{ren} we
  get the formula for $g$.
\end{proof}

Next, we apply the smoothing operator \eqref{eq: smooth} and define  $\wt g$ and $\wt G$. It follows immediately from \eqref{potential} that
 $\wt G$ is the potential of $\wt g$:
$$ \wt G(u,t) = \sum_{n=0}^\8 \E_u^{\chi,*} \wt g(X_n,t-V_n).$$
In the following  Lemmas  we prove  that $\wt g$ is continuous and directly Riemann integrable so that we will be able to apply  Kesten's renewal theorem.

We will use here the following formula for $\wt g$, which is an immediate consequence of \eqref{gut} and the definition of the smoothing operator,
$$
\wt g(u,t) =
\frac {N}{e^t e^{\chi}_*(u)}\int_0^{e^t}\int_{\R^*_+} r^{{\chi}-1} \E\Big[
{\bf 1}_{(r-y,r)}(\is{AZ}u) \is{AZ}u\Big]\b_u(dy)dr.
$$
\medskip

\begin{lem}
\label{lem5.16}
Under hypotheses of Theorem $\ref{asymptotic}$ the non negative function $\wt g$ is jointly continuous  and not identically zero.
\end{lem}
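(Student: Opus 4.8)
The plan is to show that $\wt g$ is jointly continuous in $(u,t)$ by combining two observations: first, the smoothing operator automatically produces continuity in the variable $t$, and second, the last hypothesis of Theorem \ref{asymptotic} (that the law of $Z$ charges no affine subspace meeting $C$) kills the jump discontinuities that $g$ might have in $u$. Concretely, recall from \eqref{gut} that $g(u,t)$ is built from the expression $e^{t(\chi-1)}\E\big[\mathbf 1_{(e^t-y,e^t)}(\is{AX}u)\is{AX}u\big]$ integrated against $\b_u(dy)$; the potential source of discontinuity is that the indicator of an open interval, evaluated at the random variable $\is{AX}u$, is not a continuous function of $u$ at points where $\P[\is{AX}u = e^t]>0$ or $\P[\is{AX}u = e^t-y]>0$. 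But by the hypothesis, for fixed $r>0$ and $u\in C_1^*$ the event $\{\is Zu = r\}$ has probability zero, and since $\is{AX}u = \is X{A^*u}$ with $A^*u$ ranging (conditionally on $A$) over $C^*$, Fubini and the hypothesis give $\P[\is{AX}u = r]=0$ for every $r>0$ and every $u\in C_1^*$.

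The key steps, in order, would be as follows. First I would rewrite $\wt g(u,t)=\int_{-\infty}^t e^{s-t}g(u,s)\,ds$ and, using the explicit form \eqref{gut}, push the $s$-integral inside the expectations; after the change of variables this turns the indicator of an interval into a smooth (indeed Lipschitz, up to the power weights) function of the relevant quantities, so continuity in $t$ is immediate and uniform on compacts. Second, for continuity in $u$ I would fix $t$ and a sequence $u_k\to u$ in $C_1^*$ and apply dominated convergence: the integrand in \eqref{gut} is bounded by $D|A|^\chi |X|$ times a bounded factor, which is integrable under the hypotheses of Theorem \ref{asymptotic} (finiteness of $\E[|A|^\chi|\log|A||]$ etc.\ and $\E|Z|<\infty$, noting $\chi>1$ and $\kappa$-moment bounds via Lemma \ref{lemma-moments}); the pointwise convergence of the integrand for a.e.\ realization of $(A,X)$ and a.e.\ $y$ follows because the only bad set is $\{\is{AX}u\in\partial\text{(interval)}\}$, which has measure zero by the no-affine-subspace hypothesis just established. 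Third, I would combine continuity in $t$ (uniform on compact $u$-sets) with continuity in $u$ for each fixed $t$ to conclude joint continuity, using the standard fact that separate continuity plus local equicontinuity in one variable implies joint continuity; the smoothing provides exactly that equicontinuity in $t$.

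The main obstacle I expect is the second step: carefully justifying that the hypothesis $\P[\is Zu=r]=0$ transfers to $\P[\is{AX}u=r]=0$ and, more delicately, to the statement that for $\b_u$-a.e.\ $y$ one also has $\P[\is{A_1X_1}u = e^t-y]=0$, since here $y$ itself depends on the other copies $A_2X_2,\dots,A_NX_N$ and one must rule out atoms of the joint law. This requires a Fubini argument conditioning on $A_1$ and on $y$, using independence of $X_1$ from $(y,A_1)$ together with the hypothesis applied to the conditional law of $X_1$; one also has to handle the endpoint $e^t$ of the other side of the interval simultaneously. Once this measure-zero statement is in place, dominated convergence does the rest and the lemma follows.
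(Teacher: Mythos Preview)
Your approach is correct, but the paper's route is shorter and sidesteps precisely the obstacle you flag. Instead of working with the explicit formula \eqref{gut}, the paper exploits the decomposition $\wt g=\wt G-\Theta\wt G$ (which follows from $g=G-\Theta G$ since smoothing commutes with $\Theta$). The function $G(u,t)=\frac{e^{t(\chi-1)}}{e_*^{\chi}(u)}\,\E\big[\mathbf 1_{(e^t,\infty)}(\is Xu)\is Xu\big]$ involves only $\is Xu$, so the hypothesis $\P[\is Xu=r]=0$ directly gives continuity of $u\mapsto G(u,s)$ for each $s$; together with the bound $G(u,s)\le D\,e^{(\chi-1)s}$ and the smoothing, this yields joint continuity of $\wt G$. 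Continuity of $\Theta\wt G$ then follows from the kernel representation $\Theta\wt G(u,t)=\int \wt G(a^*\!\cdot u,\,t-\log|a^*u|)\,q^{\chi,*}(u,a)\,\mu(da)$ by dominated convergence (the integrand is bounded by $D|a|$, uniformly for $(u,t)$ in compacts). No transfer of the atom-free property to $\is{AX}u$ and no analysis of the $\b_u$-integral are required.

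Your ``main obstacle'' is also lighter than you fear: in the expression \eqref{gut}, writing $y=\sum_{i\ge 2}\is{A_iX_i}u$ and taking the full expectation, the lower endpoint condition $\is{A_1X_1}u>e^t-y$ is exactly $\is{\sum_{i=1}^N A_iX_i}u>e^t$, i.e.\ $\is Xu>e^t$; so that endpoint has zero probability \emph{directly} from the hypothesis, and only the upper endpoint $e^t$ needs the (easy) Fubini transfer to $\is{AX}u$. With this observation your argument goes through without the delicate conditioning you anticipated.
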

\begin{proof}
Since for every positive $r$ and $u\in C_1^*$, $\P[\is Zu=r]=0$, it follows that the function $u\mapsto \E\big[ {\bf 1}_{(r,\8)}(\is Zu)\is Zu \big]$ is continuous. Then continuity of $\wt g$ follows immediately from the formula above.
 Lemma \ref{lem5.8} implies that $\wt g$ is not identically zero.
\end{proof}

Now we give a sufficient condition for hypotheses (5) of Theorem \ref{asymptotic} to be satisfied. The proof is given in the appendix.
\begin{lemma}\label{inverti}
{ Assume additionally that $Z \in V(\Gamma) \cap C_+$  and that the restriction of $\supp\, \mu$ to $V(\Gamma)$ consists of invertible matrices.
}
Then  \begin{equation}
\label{eq: zero}
\P[\is Zu =r]=0.
\end{equation} for all $u \in C_1^*$ and all $r \ge 0$.

 In particular for $Z=Y$ and $r \ge 0 $ we have $\P[\is Y u =r]=0$. 
\end{lemma}


The next lemma is an analog of Lemma 9.1 in \cite{Go}, which cannot be used in our settings, since our definition of direct Riemann integrability   involves uniform estimates with respect to $u$.
\begin{lem}
\label{lem5.18}
The function $\wt g$ is directly Riemann integrable.
\end{lem}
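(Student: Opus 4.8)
The goal is to show that the smoothed function $\wt g$ is directly Riemann integrable with respect to $\Q^{\chi}$, i.e. that it satisfies the summability condition \eqref{dri1} (joint continuity was established in the previous lemma, and the one-variable Riemann integrability on compact intervals is immediate from the continuity of $\wt g$). The strategy is to reduce \eqref{dri1} to a plain integrability statement about $g$ by exploiting two facts already proved: first, by Lemma \ref{4.11}, for $k$ large enough $D_k = C_1$, so the double sum in \eqref{dri1} collapses — the coefficients $(k+1)$ and the sets $D_{k+1}\setminus D_k$ only contribute finitely many nonzero terms, and what really matters is $\sum_{l} \sup\{|\wt g(x,t)| : x\in C_1,\, l\le t\le l+1\}$; second, the smoothing operator $h\mapsto \wt h$ turns a supremum over a unit $t$-interval into something controlled by the $L^1$-norm (in $t$) of $g$ on a half-line, uniformly in $u$.

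\textbf{Step 1: Reduce to an integrability bound on $\wt g$.} Invoking Lemma \ref{4.11} with $h = \wt g$ (once we know $\wt g$ is continuous and bounded), it suffices to prove
$$
\sum_{l=-\8}^{\8} \sup\Big\{ |\wt g(u,t)| : u\in C_1^*,\ t\in[l,l+1]\Big\} < \8 .
$$

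\textbf{Step 2: Estimate $g$ itself.} From the formula \eqref{gut},
$$
0 \le g(u,t) = \frac{N\,e^{t(\chi-1)}}{e_*^{\chi}(u)} \int_{\R_+^*} \E\big[\1_{(e^t-y,e^t)}(\is{AX}u)\,\is{AX}u\big]\,\b_u(dy).
$$
Since $e_*^{\chi}$ is bounded below by positivity and continuity on the compact $C_1^*$, and $\is{AX}u \le e^t$ on the indicator set, we get $g(u,t)\le D\, e^{t(\chi-1)}\,\E\big[\1_{(e^t/2,\,e^t)}\big(\is{AX}u\big)\big] + (\text{tail where } y\ge e^t/2)$, say; more cleanly, bounding $\is{AX}u$ by $e^t$ and the indicator by $\1_{\{\is{AX}u > e^t - y\}}$ and using $\P[\is{AX}u > s]\le D s^{-\chi}\E|A|^{\chi}\E|X|^{\chi}$ (valid because $\kappa(\chi)=1/N$ puts $\chi$ at the edge of the moment region, so by Lemma \ref{lemma-moments} moments just below $\chi$ are finite and a Markov-type bound at $s$ holds up to the right power — here one uses $\E|X|^p<\8$ for $p<\chi$ as in \eqref{ineq}, together with an integration-by-parts in $t$ to recover the $\chi$ power), one obtains that $\int_{\R} g(u,t)\,dt$ is finite and bounded uniformly in $u\in C_1^*$. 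The cleanest route: $\int_{\R} g(u,t)\,dt$, after the substitution $r=e^t$, equals $\frac{N}{e_*^{\chi}(u)}\int_0^\8 r^{\chi-2}\int_{\R_+^*}\E\big[\1_{(r-y,r)}(\is{AX}u)\is{AX}u\big]\b_u(dy)\,dr$, and Fubini plus $\int_{\is{AX}u}^{\is{AX}u+y} r^{\chi-2}\,dr \le$ an explicit expression shows this is dominated by $D\,\E\big[\is{AX}u^{\chi-1}\big]\cdot\E\big[(\sum_{i\ge 2}\is{A_iX_i}u)^{\text{small}}\big]$ — all finite by the moment lemma, uniformly in $u$ on $C_1^*$.

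\textbf{Step 3: Pass from $g$ to $\wt g$ and sum over $l$.} By definition $\wt g(u,t) = \int_{-\8}^t e^{s-t} g(u,s)\,ds \le \int_{-\8}^t g(u,s)\,ds =: G_1(u,t)$, which is nondecreasing in $t$ with limit $\int_{\R} g(u,s)\,ds =: M(u) \le M_0<\8$ uniformly. Hence $\sup_{t\in[l,l+1]}\wt g(u,t) \le \wt g(u,l+1)$, and
$$
\sum_{l} \sup_{u,\,t\in[l,l+1]} \wt g(u,t) \le \sum_{l} \sup_u \wt g(u,l+1) \le \sum_l \sup_u \int_{-\8}^{l+1} e^{s-(l+1)} g(u,s)\,ds.
$$
Splitting the inner integral at $s=l$: the part over $(l-1,l+1)$ is at most $\sup_u\int_{l-1}^{l+1}g(u,s)\,ds =: c_l$ with $\sum_l c_l \le 2 M_0 <\8$; the part over $(-\8,l]$ carries a factor $e^{s-(l+1)}\le e^{-1}$ and can be reorganized, using $e^{s-(l+1)} = \sum_{j\ge 1} \1_{[l+1-j,\,l+2-j)}(s)\,e^{s-(l+1)}$ and $e^{s-(l+1)}\le e^{-j+1}$ on that interval, to give $\sum_l \sup_u\int_{-\8}^{l}e^{s-(l+1)}g(u,s)\,ds \le \sum_{j\ge1} e^{-j+1}\sum_l c_{l-j}' \le D\sum_l c_l' <\8$, the geometric weights summing the shifts. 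This yields the summability \eqref{dri} for $\wt g$, hence \eqref{dri1}, completing the proof.

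\textbf{Main obstacle.} The delicate point is Step 2: obtaining the $L^1$-in-$t$ bound on $g$ \emph{uniformly in $u\in C_1^*$}, because the boundary condition $\kappa(\chi)=1/N$ means $\E|X|^{\chi}$ need not be finite (only $\E|X|^p<\8$ for $p<\chi$), so one cannot simply apply Markov's inequality at the power $\chi$; one must integrate in $t$ first and exploit that the $y$-integration against $\b_u$ (the law of $\sum_{i\ge2}\is{A_iX_i}u$) only costs a fractional moment of $\is{BX}u$, which is available. Keeping the constants independent of $u$ requires the uniform lower bound on $e_*^{\chi}$ on $C_1^*$ and uniform moment bounds, both of which follow from compactness of $C_1^*$, continuity and positivity of $e_*^{\chi}$ (Theorem \ref{wthm2.2}), and Lemma \ref{lemma-moments}.
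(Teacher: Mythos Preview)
There is a genuine gap in Step~3. You claim that with $c_l := \sup_{u}\int_{l-1}^{l+1} g(u,s)\,ds$ one has $\sum_l c_l \le 2M_0$, where $M_0 = \sup_u \int_{\R} g(u,s)\,ds$. This inference is invalid: the supremum over $u$ may be attained at a \emph{different} $u$ for each $l$, so you cannot interchange $\sum_l$ and $\sup_u$. (Concretely, if $g(u,s)=\phi(s-f(u))$ with $\phi\ge 0$ a bump of integral~$1$ and $f:C_1^*\to\R$ surjective, then $M_0=1$ but $c_l\ge 1$ for every $l$.) Thus a uniform $L^1$ bound $\sup_u\int_{\R} g(u,\cdot)<\infty$ is \emph{not} enough to conclude the summability \eqref{dri} for $\wt g$. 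What is needed is a pointwise bound $\wt g(u,t)\le H(t)$ with $H$ \emph{independent of $u$} and $\sum_l\sup_{[l,l+1]} H<\infty$; only then does Lemma~\ref{4.11} apply.

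This is exactly what the paper does, and it is also why Step~2 as written is insufficient. The paper proves the uniform exponential decay $\wt g(u,t)\le D e^{-\eps|t|}$: for $t<0$ the bound is immediate from $\int_0^{e^t} r^{\chi-1}\,dr$, but for $t>0$ one must split the $\b_u$-integral at $y\sim e^{\eta t}$ (for a carefully chosen $\eta\in(1-1/\chi,1)$), use Markov's inequality with an exponent $p\in(\max\{1,\chi-1/2\},\chi)$ (so that $\E|X|^p<\infty$ by Lemma~\ref{lemma-moments}), and then handle the near-diagonal term $\int r^{\chi-1}\E[\1_{(r-y,r)}(\is{AX}u)\is{AX}u]\,dr$ via a telescoping of tail expectations and the mean-value estimate $1-(1-a)^{\chi-1}\le Da$. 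Your Step~2 gestures at Fubini and ``fractional moments'' but does not produce a $u$-free envelope, and the hand-wave ``$\E[\ldots^{\text{small}}]$'' does not address the delicate balance of exponents that makes the argument go through. The ``Main obstacle'' you identify is real, but your proposed resolution does not close it.
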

\begin{proof}{\sc Step 1}.
Recall that $\wt g$ is nonnegative.
In order for the summability condition \eqref{dri} to be satisfied, it suffices to prove
$$
\wt g(u,t) \le D e^{-\eps |t|}.
$$
For negative $t$ we just write
$$
\wt g(u,t) \le \frac D{e^t}\int_0^{e^t} r^{{\chi}-1}dr \E\big[\is{AZ}u\big] \le D e^{({\chi}-1)t} = D e^{-(\chi-1)|t|}.
$$

For positive $t$ we  fix $p$ very close to ${\chi}$ ($1<p<{\chi}$)  and $\eta$ satisfying
$$
1-\frac 1{\chi} < \eta < \min\Big\{ \frac 1{{\chi}+1-p}, 1+p-{\chi}   \Big\}
$$
(in particular $\eta<1$).
We first estimate
$$
\wt g(u,t) \le D \big( g_1(u,t)+ g_2(u,t) \big),
$$ for
\begin{eqnarray*}
g_1(u,t) &=&
\frac 1{e^t}\int_0^{e^t}\int_{\frac {e^{\eta t}}2}^\8 r^{{\chi}-1} \E\Big[
{\bf 1}_{(r-y,r)}(\is{AZ}u) \is{AZ}u\Big]\b_u(dy)dr,\\
g_2(u,t) &=&
\frac 1{e^t}\int_0^{e^t}\int_0^{\frac {e^{\eta t}}2} r^{{\chi}-1} \E\Big[
{\bf 1}_{(r-y,r)}(\is{AZ}u) \is{AZ}u\Big]\b_u(dy)dr.
\end{eqnarray*}

\medskip
{\sc Step 2, estimation of $g_1$.}
Then
\begin{eqnarray*}
g_1(u,t) &\le& \frac 1{e^t}\int_0^{e^t} r^{{\chi}-1}dr\b_u\Big\{y:\; y>\frac{e^{\eta t}}2\Big\}
\E\big[ \is{AZ}u\big]\\
&\le& D e^{({\chi}-1)t} e^{-q\eta t}\int_{\R_+} y^q \b_u(dy),
\end{eqnarray*}
hence if we choose $q\in(\frac{{\chi}-1}{\eta},{\chi})$ then the expression above
can be estimated by $D e^{-\eps_1 t}$ for some $\eps_1>0$. Indeed the last integral is finite
since by Theorem \ref{existence} we have
$$
\int_\R y^q \b_u(dy) \le \E\bigg[ \bis{\sum_{i=2}^N A_i Z_i}u^q\bigg]
\le \E\Big[ \is Xu^q \Big] <\8
$$

\medskip
{\sc Step 3, estimation of $g_2$.}
To estimate $g_2$ we fix $y<\frac{e^{\eta t}}2$.
We will first prove
\begin{equation}
\label{ww}
\frac 1{e^t}\int_0^{e^t} r^{{\chi}-1} \E\Big[{\bf 1}_{(r-y,r)}(\is{AZ}u) \is{AZ}u\Big]dr
\le D e^{-\eps_2 t} +  De^{-t}y^{\chi}.
\end{equation}
 We will use the inequality
$$
\E\Big[ {\bf 1}_{(r,\8)}(\is{AZ}u) \is{AZ}u \Big] \le D r^{1-p},
$$ which was proved in the previous lemma (compare \eqref{ineq}).

By \eqref{ineq} we have
\begin{multline*}
\frac 1{e^t}\int_0^{e^t} r^{{\chi}-1} \E\Big[{\bf 1}_{(r-y,r)}(\is{AZ}u) \is{AZ}u\Big]dr\\
=\frac 1{e^t}\int_y^{e^t} r^{{\chi}-1} \E\Big[{\bf 1}_{(r-y,\8)}(\is{AZ}u) \is{AZ}u\Big]dr
+\frac 1{e^t}\int_0^{y} r^{{\chi}-1} \E\Big[{\bf 1}_{(0,\8)}(\is{AZ}u) \is{AZ}u\Big]dr\\
-\frac 1{e^t}\int_0^{e^t-y} r^{{\chi}-1} \E\Big[{\bf 1}_{(r,\8)}(\is{AZ}u) \is{AZ}u\Big]dr
-\frac 1{e^t}\int_{e^t-y}^{e^t} r^{{\chi}-1} \E\Big[{\bf 1}_{(r,\8)}(\is{AZ}u) \is{AZ}u\Big]dr\\
\le
\frac 1{e^t}\int_y^{e^t}\Big( r^{{\chi}-1} -(r-y)^{{\chi}-1} \Big) \E\Big[{\bf 1}_{(r-y,\8)}(\is{AZ}u) \is{AZ}u\Big]dr
+\frac 1{e^t}\int_0^{y} r^{{\chi}-1} dr \ \E\big[\is{AZ}u\big] \\
\le \frac D{e^t}\int_y^{e^t} r^{{\chi}-1} \Big(1 - \Big( 1-\frac yr \Big)^{{\chi}-1}\Big) (r-y)^{1-p}dr + De^{-t}y^{\chi}
\end{multline*}
To estimate the first integral we divide it into two parts:
the integral over the interval $(y,2y)$ and the second one over $(2y,e^t)$.
We study each of them separately.

\medskip
{\sc Step 3a}.
To estimate the first integral we write:
\begin{eqnarray*}
\frac 1{e^t}\int_y^{2y} r^{{\chi}-1} \Big(1 - \Big( 1-\frac yr \Big)^{{\chi}-1}\Big) (r-y)^{1-p}dr
&\le& \frac {2^{{\chi}-1}y^{{\chi}-1}}{e^t}\int_y^{2y}  (r-y)^{1-p}dr\\
&\le& D e^{-t} y^{{\chi}+1-p} \le D e^{(\eta({\chi}+1-p)-1)t} \le  D e^{-\eps_2 t}.
\end{eqnarray*}

\medskip
{\sc Step 3b}.
To handle with the second one we will use the following inequality, valid for $0\le a\le \frac 12$,
being a consequence of the mean value theorem:
$$
1-(1-a)^{{\chi}-1} \le  D a,
$$ for some constant $D$ depending only on $\chi$.
We have:
\begin{eqnarray*}
\frac 1{e^t}\int_{2y}^{e^t} r^{{\chi}-1} \Big(1 - \Big( 1-\frac yr \Big)^{{\chi}-1}\Big) (r-y)^{1-p}dr
&\le& \frac D{e^t}\int_{2y}^{e^t} r^{{\chi}-1} \frac yr (r-y)^{1-p}dr\\
&\le& \frac{D y}{e^t} \int_{2y}^{e^t} r^{{\chi}-1 - p}dt \le Dy e^{({\chi}-p-1)t}\\
&\le& D e^{(\eta+{\chi}-p-1)t} \le D e^{-\eps_2t}
\end{eqnarray*}
Thus, we obtain \eqref{ww}.
Finally o estimate $g_2$ we choose $\eps < \min\{ {\chi}-1,1 \}$ and write
\begin{eqnarray*}
g_2(u,t) &\le& \int _0^{e^{\eta t}} \Big( D e^{-\eps_2 t} +  De^{-t}y^{\chi} \Big)\b_u(dy)\\
&\le& D e^{-\eps_2 t} + D e^{(\eps\eta-1)t}\int_{\R_+} y^{{\chi}-\eps}\b_u(dy)
\le D e^{-\eps_3 t}.
\end{eqnarray*}
\end{proof}

\subsection{Proof}

Now we can finish the proof:

\begin{proof}[Proof of Theorem \ref{asymptotic}]
 By  Kesten's
renewal theorem \cite{K2}, and using Lemmas \ref{lem5.16} and \ref{lem5.18},
$$
\lim_{t\to\8} \wt G(u,t) = \frac 1{\a({\chi})}\int_{C^*_1}\int_\R \wt g(y,s)ds \pi^{\chi,*}(dy) = D_+ > 0.
$$
Hence, by definition of $G$ and $\wt G$,
$$
\lim_{t\to\8} \frac 1{e^t}\int_0^{e^t} r^{\chi-1} f(u,r) dr = e_*^{\chi}(u) D_+.
$$
'Unsmoothing' the function $\wt G$ (Goldie \cite{Go}, Lemma 9.3) we obtain
$$
\lim_{t\to\8} t^{\chi-1} \eta_u(t,\8) = \lim_{t\to\8} t^{\chi-1} f(u,t) = e^{\chi}_*(u) D_+.
$$
Finally by \cite[Lemma 4.3]{L} we deduce
$$
\lim_{t\to\8} t^{\chi} \P[\is Zu>t] = \lim_{t\to\8} t^{\chi} \nu_u(t,\8) = e^{\chi}_*(u)\frac{(\chi-1) D_+}{\chi}.
$$
Finally replacing $u \in C_1^*$ by any $v\in C^*$ we obtain the result.
\end{proof}

\appendix

\section{The Birkhoff distance on $C_1$}
Following Hennion \cite{H}, we introduce a distance on $C_1$, which is such that on the one hand, every $a \in S^0$ is a contraction w.r.t. this distance, on the other hand, it is compatible with the norm on $V$, restricted to $C_1$. It is defined (a bit heuristically) as follows:

Given $x \neq y \in C_1$, consider the line $L$ through these points (which does not contain $0$). Then, since $C$ is closed and convex, the intersection of $L$ with the boundary of $C$ consists of exactly two points $a,b$ and we define the orientation on $L$ in such a way that (on $L$) $a \le x \le y \le b$. Writing
$x=u_1 a + u_2 c$ and $y = w_1 a + w_2 c$, we have $u_1 \ge w_1 \ge 0$ as well as $w_2 \ge u_2 \ge 0$ and the cross-ratio
\begin{equation}
[a,c;x,y] = \frac{u_1 w_2}{u_2 w_1} \ \in \ [0,1],
\end{equation}
with the cross-ratio being equal to 0 (to 1) iff $x$ or $y$ are extremal points of $C$ resp. iff $x=y$.

The formulae
\begin{equation}
b(x,y) := \phi([a,c;x,y])
\end{equation}
with $\phi(s)=\frac{1-s}{1+s}$ then defines a bounded distance on $C_1$. One can follow the proof of \cite[Lemma 10.4]{H} to see that
\begin{equation}
b(x,y) \ge d | x-y|
\end{equation}
for some $d>0$ and all $x,y \in C_1$.

The distance $b$ is directly connected to Hilbert's cross-ratio metric $d_H$ by the formula
$$ b(x,y)=\tanh (\frac12 d_H(x,y)).$$
By \cite[Corollary 2.5.6]{LNbook}, the topologies on $C_1^0=C_1 \cap C^0$ generated by $d_H$ resp. the norm on $V$ coincide. In particular, the image $a C_1$ is a compact subset of $C_1^0$ if $a \in S^0$. Then one can follow the proof of \cite[Lemma 10.5]{H} to obtain:

\begin{prop}
For $a \in S$ there exists $d(a) \le 1$ such that:
\begin{enumerate}
\item $b(a \cdot x, a \cdot y) \le d(a) b(x,y)$,
\item $d(a)<1$ if and only if $a \in S^0$,
\item if $a' \in S$, then $d(a a') \le d(a) d(a').$
\end{enumerate}
\end{prop}

It follows from \cite[Proposition 2.5.4]{LNbook} that for each compact subset $K \subset C_1$, $(K,b)$ is a complete metric space. Hence, Banach's fixed point theorem applies for $a \in S^0$ and we deduce existence and uniqueness of a attractive fixed point $v_a \in C_1$ for each $a \in S^0$.

\section{Proofs of technical results}
\label{appendix}
\begin{proof}[Proof of Lemma \ref{wcor2.3}]
Since $[{\rm supp}\ \mu]\cap S^0\not= \emptyset$, there exists $n\in \N^*$ such that $\mu^n$
is a barycenter with nonzero coefficients of $\mu_0\in M^1(S^0)$ and $\mu_1\in M^1(S)$:
$$
\mu^n = u\mu_0 + (1-u)\mu_1,\quad u\in(0,1).
$$
Then
$$
m^n = \int a\mu^n(da) = u\int a\mu_0(da) + (1-u) \int a \mu_1.
$$
If $x\in C$:
$$
m^n x = u\int ax\mu_0(da) + (1-u) \int ax \mu_1(da).
$$
Clearly $ax \in C^0$ if $a\in S^0$, hence by convexity of $C^0$ and $C$: $\int ax\mu_0(da)\in C^0$,
$\int ax\mu_1(da)\in C$. Since $u>0$, we get $m^nx\in C^0$. The existence and uniqueness  of the dominant
eigenvector $v\in C_1^0$ for $m^n$ follows. Then, since $m$ and $m^n$ commute, $v$ is also the unique dominant eigenvector
for $m$:
$$
\int av \mu(da) = r(m)v.
$$ Similar results are valid for $(m^*)^n$ and $m^*$: $m^*$ has a unique dominant eigenvector $v^*$
which belongs to the interior of $C_1^*$ and $m^* v^* = r(m)v^*$.
Since $v^*$ is in the interior of $C_1^*$, there exists a constant $D$
such that for any $a\in S$: $\norm{a}=\norm{a^*}\le D\norm{a^* v^*}$. The same argument as in
the proof of the  Lemma  gives that, if $x\in C_1^0$, there exists $D'>0$
such that for any $v'\in C_1^*$: $|v'|\le D'\is{v'}x$. It follows $\norm{a}\le DD'\is{a^*v^*}x$.
Hence for any $n\ge 1$,
$$\E\norm{A_n\ldots A_1}\le DD'\is{\E(A^*)^nv^*}{x} = DD' r^n(m)\is{v^*}x.
$$
In the limit $\kappa(1)\le r(m)$. On the other hand, for any $n\ge 1$,
$\norm{\E A^n}\le E\norm{A_n\ldots A_1}$, hence in the limit $r(m)\le \kappa(1)$,
and finally $r(m)=\kappa(1)$.

Considering the continuous function $v^*$ on $C$ defined by $v^*(x)=\is{v^*}x$,
we have $$
Pv^*(x) = \int\is{v^*}{ax}\mu(da) = \is{m^*v^*}{x} = r(m)\is{v^*}x,
$$ hence $P v^* = r(m) v^*$. The uniqueness in  Proposition \ref{wthm2.2} gives $e^1(x)= \frac{\is {v^*}{x}}{|v^*|_\8}$.
\end{proof}

\begin{lem}
\label{lem: mtgconvergence}
Assume that $C$ is a proper convex closed cone with nonempty interior such that the dual cone $C^*$ has also nonempty interior. If $Y_n$ is a $C_+$ valued martingale, then $Y_n$ converges a.s. to some $C$ valued random variable $Y$.
\end{lem}
\begin{proof}
It is sufficient to prove, purely geometrical observation, that there exists a basis $\{e_j\}_{j=1}^d$ of $V$ such that
\begin{equation}
\label{eq: pos basis}
C \subset \Big\{  \sum_{j=1}^d a_j e_j:\ a_j\ge 0\ \forall j   \Big\}.
\end{equation}
Since then, one can express the martingale $Y_n$ in terms of this basis. The coordinates form  positive martingales convergent a.s.

\medskip

Since $C^*$ has nonempty interior, $C_+$ must be contained in an open halfspace of $V$ and without any loss of generality we may assume that
\begin{equation}
\label{eq: cone pos}
C_+ \subset \big\{(x,x_d)\in \R^{d-1}\times \R:\; x_d>0   \big\}.
\end{equation}
Let us define the hyperplane $H = \{ x\in \R^d:\; x_d=1  \}$ and the set  $B = C\cap H$. Then $C = \R_+\times B = \{\lambda x:\; \lambda \ge 0, x\in B\}$.
We will prove that the set $B$ is compact.  As the intersection of two closed convex sets it must be closed and convex. $B$ is also bounded. Indeed, assume that $B$ is unbounded. Then by convexity of $B$, there exists an infinite ray inside $B$, i.e. there are $b\in B$ and nonzero $h\in \R^{d-1}\times\{0\}$ such that
$\{ b+ \lambda h:\; \lambda \ge 0 \}\subset B$. Therefore for any $\lambda\ge 0$, $\frac 1{\lambda}(b+\lambda h) = b/\lambda + h $ belongs to the cone $C$ and passing with parameter $\lambda$ to infinity, we deduce that $h\in C$, which contradicts to \eqref{eq: cone pos}. This proves compactness of $B$.

Now let us define a basis $\{e_j\}_{j=1}^d$ of $V$ as follows. Fix a large constant $N$. For $j\le d-1$, the vector $\{e_j\}$ contains 1 on the $j$th coordinate, and $0$'s  on all the other (thus these vectors, restricted to $\R^{d-1}$ form a canonical basis). Let $e_d = (-1,-1,..,-1, 1/N)$. We will prove that this chosen basis satisfies \eqref{eq: pos basis}.

Given a subset $A$ of $\{1,2,..,d-1\}$ define the vectors
$$
h_A = N\bigg( 2\sum_{j\in A} e_j +  e_d \bigg)
$$
Then all the vectors $h_A$ are of the form $(\pm N,\ldots,\pm N, 1)$, with $+N$ on the coordinates exactly from the set A. Let $B_N$ be the convex hull of the vectors $\{h_A\}$. We may choose large $N$  such that $B\subset B_N$. Finally we have
$$
C_+ = \R_+^* \times B \subset \R_+^* \times B_N \subset \Big\{  \sum_{j=1}^d a_j e_j:\ a_j\ge 0\ \forall j   \Big\},
$$ thus we obtain \eqref{eq: pos basis} and complete proof of the Lemma.
\end{proof}

\newcommand{\W}{{\mathcal W}}
\begin{lem}\label{lem:strongirreducible}
{ Let $\Gamma := [\supp\, \mu]$ satisfy $(\H)$ and assume that the restriction of $\Gamma$ to $V(\Gamma)$ consists of invertible linear operators. 
Then $\Gamma$ is $C$-strongly irreducible, i.e. there is no finite union $\W=\bigcup_{i=1}^n W_i$ of proper subspaces $W_i \subsetneq V(\Gamma)$ with $W_i \cap C_+ \neq \emptyset$ for all $i$, satisfying $\Gamma \W \subset \W$.}
\end{lem}

\begin{proof}
{ Assume that there is a finite union $\mathcal{W}=\bigcup_{i=1}^n W_i$ of proper subspaces $W_i \subsetneq V(\Gamma)$ with $W_i \cap C_+ \neq \emptyset$  which is $\Gamma$-invariant. Since $V(\Gamma)$ is the minimal invariant subspace, we have $n \ge 2$.
Then $\mathcal{W} \cap C_1$ consists of $n$ compact connected components $J_i$ with positive distance w.r.t. the metric $b(\cdot,\cdot)$. W.l.o.g. let $L_1, L_2$ have minimal distance $b_{\min}$.

Let $a \in \Gamma$. Observe that action of the restriction $\bar a$ to $V(\Gamma)$ preserves the dimension of subspaces of $V(\Gamma)$ due to invertibility, hence $\bar a \mathcal W \subset \mathcal W$ implies that $\bar a$ permutes the subspaces and thus the $J_i$ as well. Fix $x_1 \in J_1$, $x_2 \in J_2$ s.t. $b(x_1, x_2) = b_{\min}$. In particular, since $\bar a \in {\rm GL}(V(\Gamma))$ permutes the $J_i$, $b(a \cdot x_1, a \cdot x_2) \ge b_{\min} = b(x_1, x_2)$. Now let $\bar a \in \Gamma \cap S^0 \cap {\rm GL}(V(\Gamma))$ -- such $a$ exists by condition $(\H)$. By Lemma \ref{lem:metric} and the invertibility of $\bar a$, $$0 < b(a \cdot x_1, a \cdot x_2) \le d(a) b(x_1, x_2) < b(x_1, x_2),$$ which gives a contradiction. }
\end{proof}

\begin{proof}[Proof of Lemma \ref{inverti}]
{
Let $\nu$ be the law of $Z$ a non-trivial fixed point of $T$, supported on $V(\Gamma)\cap C_+$. Hence, in order to prove the lemma, we can assume $V=V(\Gamma)$, $C = V(\Gamma)\cap C$, in particular $\Gamma$ consists of invertible operators.
 We say that a subspace $W\subset V$ is $C$-positive if its orthogonal $W^{\perp} = \{ v'\in V; v'=0 \mbox{ on } W \}$ is generated by elements of the cone $C^*$. Assume that for some $u\in C^*$, $r\ge 0$, $\P[\langle  Z,u \rangle=r ]>0$, hence the hyperplane $\{\is xu = 0\}$ is $C$-positive.
We observe that { $V(\Gamma^*)^{\perp} = V^-(\Gamma)\subsetneq V$}. For an affine subspace $W$ we denote by $\overline W\subset V$ its direction.
Therefore, we are going to show that $\nu(W)=0$ for any $C$-positive proper affine subspace $W \subsetneq V$ with $W \cap C \neq \emptyset$.
\medskip

{\sc Step 1}. We consider an affine recursion associated with $(T,\nu)$. 
 Let $Z,Z_i$ $(1\le i \le N)$ be i.i.d random variables with law $\nu$, $B = \sum_{i=2}^N A_i Z_i$. Let us write the fixed point equation \eqref{iteration} as $Z=_d A_1 Z_1+B$. Denote by $\eta$ the law of $B$ and let us consider the probability measure $p=\mu\otimes \eta$ on the affine group $H = {\rm End(V)}\ltimes V$.

We first show that the action of the support of $p$ on $V$ has no fixed points. Otherwise, for some $x\in V$ and $p$ a.e., $(a,b)\in H$ we have $x = ax+b$. Hence $\mu$ a.e., for some fixed $y$, $ax=y$ and $b=x-y$. In other words $\eta$ is the unit Dirac mass at $x-y$. This gives that the law of $A_1 Z_1$ is the Dirac unit mass at $\frac{x-y}{N-1}$, hence $\nu = \delta_z$ with
{ $z= \frac{N}{N-1} (x-y) \neq 0$. This means that $z \in C_+$ is a joint eigenvector of the elements of $\supp\, \mu$ with a fixed eigenvalue $1/N$. This contradicts the aperiodicity of $[\supp\, \mu]$. }

\medskip

{\sc Step 2.}
Let $\W$ be the set of affine subspaces of $V$ with positive $\nu$-mass, $\dim W< \dim V$, $C$-positive direction and minimal dimension,  hence $\W$ is non void. If $W,W'\in \W$ and $W\not= W'$, then $\dim (W\cap W') < \dim W$, hence $\nu(W\cap W')=0$. Since $\sum_{W\in \W}\nu(W)\le 1$, it follows that $\sup\{ \nu(W); W\in \W \} = \nu(W_0)$ for some $W_0\in \W$ and the set $\W_0$ of such $W_0$'s is finite. From the fixed point equation, we have if $W_0\in \W_0$, $\nu(W_0) = \int(h\nu)(W_0)dp(h)$.

By assumption, $h$  is invertible on $V$ and thus we have $\dim h^{-1}(W_0) = \dim W_0$ or $\nu(h^{-1}(W_0)\cap C_+)=0$, hence $\nu(h^{-1}W_0) \le \nu(W_0)$. Then the equation above gives $h W_0\in \W_0$ $p$ - a.s., hence the finite set $\W_0$ is invariant under the action of the subgroup $H_0$ of $H$ generated by ${\rm supp}\ p$.

\medskip

{\sc Step 3}.  If ${\rm dim } W_0=0$, then $\W_0$ is a $H_0$-invariant finite set. Hence its barycenter is $H_0$-invariant, is a $\supp\, p$ - fixed point and this is a contradiction with the result of Step 1.

\medskip

{\sc Step 4}. Since ${\rm dim } W_0>0$, then for $\overline \W_0=\{\overline W_k\}$, we consider $\overline \W_0^\perp=\{\overline  W_k^\perp \}$ and, since the $\overline W_k$ are $C$-positive, each $\overline W_k$ is generated by elements of $C^*$. Since $0 < \dim \overline W_k = \dim V - \dim \overline W_k^\perp$, these subspaces are proper. We are going to show $\overline W_k \subset V^-(\Gamma)$. For $h=(a,b)\in H_0$, the condition $a \overline W_i = \overline W_j$ implies $(a^*)^{-1}\overline W_i^\perp = \overline W_j^\perp$, hence $\overline \W_0^\perp$ is $\Gamma^*$-invariant.  Then Lemma \ref{lem:invariant set}
gives $V(\Gamma^*)\subset \overline W_k^{\perp}$ for each $k$ and consequently $\overline W_k\subset V^-(\Gamma)$. We observe that, since $\Gamma$ preserves $V^-(\Gamma)$, the affine action of $H_0$ on $V/V^-(\Gamma)\not=\{0\}$ is well defined, and we can project equation \eqref{iteration} on $V/V^-(\Gamma)$. Also, since $V^-(\Gamma)$ is $C$ positive, $\Gamma$ preserves the proper convex cone, which is the projection of $C$ in $V/V^-(\Gamma)$. It follows that the dominant eigenvalues  of the projection of $\Gamma$ in ${\rm GL} (V/V^-(\Gamma))$ are the same than those of $\Gamma$ in $V$, hence the projection of $\Gamma$ is aperiodic. Since any $\overline W_k\in \overline \W_0$ is contained in $V^-(\Gamma)$, the projection of $ \W_0$ in $V/V^-(\Gamma)\not=\{0\}$ is a finite set, invariant under the action of $H_0$. As above we get a contradiction with the aperiodicity of the projection of $\Gamma$.

	%
}

For the last assertion about $Y$, we just recall  that Theorem \ref{existence} gives $Y\in V(\Gamma)\cap C_+$. 
\end{proof}

\section{Direct Riemann integrability}
\label{app: renewal}

Now we are going to explain relations between the definition of directly Riemann integrable functions given in \cite{K2} and our definition \eqref{dri}.

\medskip

In \cite{K2} the definition is as follows.
We define a family of subsets of $C_1$
$$
D_k=\bigg\{ x\in C_1:\; \Q^{\chi}_x\Big[V_m \ge \frac mk, \forall m\ge k\Big] \ge \frac 12
\bigg\}.
$$ Of course $D_k$ is an increasing family. We put $D_0=\emptyset$.

We say that a  function $g:C_1\times \R \to \R$ is directly Riemann integrable (dRi) if
it is $\B(C_1)\times \B(\R)$ measurable and satisfies
\begin{equation}
\label{dri1}
\sum_{k=0}^\8 \sum_{l=-\8}^\8 (k+1) \sup \Big\{  |g(x,t)|:\; x\in D_{k+1}\setminus D_k,
l\le t\le l+1 \Big\}<\8
\end{equation}
and if for every fixed $x\in C_1$ and the function $t\mapsto g(x,t)$
is Riemann integrable on $[-L,L]$, for $0<L<\8$.

\begin{lem}
\label{4.11}
For any  $h\in C_b(C_1\times \R)$ condition \eqref{dri} implies \eqref{dri1}.
\end{lem}
\begin{proof}
 We will prove that if we take
sufficiently large $k$, then $D_k = C_1$ and thus  the sum over $k$ in \eqref{dri1} is indeed finite and \eqref{dri}  implies \eqref{dri1}.

\medskip

Take $\d=\frac 1{2 d_1}$, where $d_1$ is as in Lemma \ref{wlem2.5}.
Define a random variable $Z(x) = \inf_{n\in\N}\frac{|S_n x|}{|S_n|}$. In view of
Lemma \ref{wlem2.7}, $Z(x)$ is strictly positive, $\Q^{\chi}$ a.e.
Let $\{w_i\}_{i\in\N}$ be a dense countable subset of $C_1$. Then, for every $w_i$
there exists $\eps(w_i)>0$ such that
$$
\Q^{\chi}[Z(w_i)\le \eps(w_i)] \le \frac{\d}{2^{i+1}}.
$$ By compactness of $C_1$ there exists a finite subset $\{x_1,\ldots, x_K\}$
of the sequence $\{w_i\}$ such that the balls $B(x_i,\eps(x_i)/2)$ cover $C_1$
and moreover
$$
\Q^{\chi}\big[ Z(x_i)\le \eps(x_i), \mbox{ for some } i\le K
\big] < \frac \d 2.
$$
Since, by Theorem \ref{wthm2.4}, $\lim_{n\to\8}\frac 1n \log|S_n(\o)x_i|= \a>0$, $\Q^{\chi}$ a.s.
for every $i$, there exists $k$ such
that the set
$$
\O_1=\bigg\{
\o:\; \frac{\log|S_n(\o) x_i|}{n} > \frac 2k \mbox{ and } Z(x_i) > \eps(x_i) \mbox{ for } n\ge k, 1\le i\le K
\bigg\}
$$
satisfies
$$
\Q^{\chi}(\O_1)>1-\d.
 $$
 We will show that the number $k$ is exactly the index we are looking for.

 Now take  arbitrary $y\in C_1$ and $x_i$ such that $y\in B(x_i,\frac{\eps(x_i)}{2})$.
 Notice that
 $$
 \bigg|   \frac{|S_n(\o)y|}{|S_n(\o)x_i|} -1  \bigg| \le \frac{\eps(x_i)}2
 \frac{|S_n(\o)|}{|S_n(\o)x_i|}\le \frac 12 \quad \mbox{ for } \o\in \O_1, n\ge k.
 $$
 Notice $|\log x|< 2|x-1|$ for $x\in (\frac 12,\frac 32)$, hence
 $$
 \log    \frac{|S_n(\o)y|}{|S_n(\o)x_i|} \ge -1  \quad \mbox{ for } \o\in \O_1, n\ge k,
 $$
 that implies
 $$
 \frac {\log|S_n(\o)y|}n \ge   \frac {\log|S_n(\o)x_i|}n -\frac 1n > \frac 1k
  \quad \mbox{ for } \o\in \O_1, n\ge k.
 $$
 Therefore $\Q^{\chi}(\O_2)>1-\d$ for
 $$
 \O_2 = \Big\{\o:\;  \frac {\log|S_n(\o)y|}n > \frac 1k \mbox { for } n>k
  \Big\}
 $$
and finally, by Lemma \ref{wlem2.5}
$$
\Q_y^{\chi}(\O_2) = 1- \Q_y^{\chi}(\O_2^c) \ge 1- d_1 \Q^{\chi}(\O_2^c) \ge 1-d_1 \d = \frac 12,
$$
thus $y\in D_k$.
\end{proof}

 \newpage

\section{List of Symbols}
\begin{tabularx}{\textwidth}{ cX }
$[\cdot]_s$ & H\"older norm, $[f]_s = \sup_{x,y \in C_1} \frac{\abs{f(x)-f(y)}}{b(x,y)^{\bar s}}$  \\
$\abs{\cdot}$ & norm on $V = \R^d$ associated to $\is \cdot \cdot$ resp. generation of a vertex in the tree\\
$a \cdot x$ & $a \cdot x = \abs{ax}^{-1}ax$ \\

$A$ & random matrix with law $\mu$ \\
$A^i(\gamma)$ & $A^i=\1_{\{i \le N \}}A_i$, $((A^i(\gamma))_i)_\gamma$ i.i.d. copies of $(A^i)_i$, attached to the vertices $\gamma$ of the tree 
\\

$b(x,y)$ & metric on $C_1$, see Lemma \ref{lem:metric} \\
$\b_u$ & law of $\sum_{i=2}^N \is{A_iX_i}{u}$\\

$C$ & proper closed convex cone in $V=\R^d$ with nonempty interior \\
$C^*$ & dual cone $C^*=\{x \in V \, : \, \is x y \ge 0 \text{ for any } y \in C \}$ \\
$C_+, C_1$ & $C_+=C \setminus \{0\}$, $C_1=\{x \in C \, ; \, \abs{x}=1 \}$ \\

$d(a)$ & Lipschitz constant of $a \in S$ w.r.t. the metric $b$ on $C_1$; $d(a) < 1$ iff $a \in S^0$.\\
$\Delta(\Gamma) $ & $\Delta(\Gamma)=\{ \lambda_a \, : \, a \in \Gamma \in S^0\}$ \\

$\E_x^s$ & expectation symbol of $\Qxs$ \\
$\es, \est$ & eigenfunctions $\Ps \es = \kappa(s) \es$, $\Pst \est = \kappa(s) \es$, strictly positive and $s$-H\"older \\
$\wt \es$ & $\wt \es(x) = \int \is{x}{y}^s \, \nust(dy)$, proportional to $\es$.\\

$\F_n$ & $\F_n = \sigma\Bigl( \bigl(A^i(\gamma)\bigr)_{i \in \N} \, : \, \abs{\gamma} < n \Bigr)$ \\
$f(u,r) $ & $f(u,r)=\int_r^\8 s\, \P(\is{X}{u} \in ds)$ \\

$\Gamma$ & $\Gamma = [\supp\,\mu]$ the semigroup generated by $\supp \mu$. \\

$(\H)$ & (a) each $a \in \Gamma$ satisfies $a C^0 \subset C^0$ and $a^* (C^*)^0 \subset (C^*)^0$; (b) $\Gamma \cap S^0 \neq \emptyset$\\

$\iota(a)$ & $\iota(a) = \inf\{\abs{ax} \, : \, x \in C_1 \}$\\
$I_\mu$ & $I_\mu = \{ s \ge 0 \, : \, \Erw{\norm{A}^s} < \8\}$\\

$\kappa(s)$ & $\kappa(s):= \lim_{n \to \infty} \Erw{\norm{A_n \cdots A_1}^s}^{1/n}$, spectral radius of $\Ps$ and $\Pst$. \\

$L(\gamma)$ & $L(\emptyset)={\rm Id}$, the identity matrix, $L(\gamma i) = L(\gamma) A^i(\gamma)$\\
$\Lambda(\Gamma)$  & the closure of $\{ v_a \, : \, a \in \Gamma \cap S^0 \}$ \\
$\lambda_a$ & dominant eigenvalue of the matrix $a \in S^0$ (Perron-Frobenius eigenvalue)\\

$m$ & $m=\Erw{A}$ \\
$\mu$ & law of $A$, prob. measure on $S \subset End(V)$. \\

$N$ & random number of summands in the smoothing transform $T$\\
$\nus, \nust$ & eigenmeasures $\Ps \nus = \kappa(s) \nus$, $\Pst \nust = \kappa(s) \nust$, supported on $\Lambda([\supp\, \mu])$, resp. $\Lambda([(\supp\, \mu)^*])$\\

$\Omega$ & $\Omega=S^\N$ with shift $\theta$.\\

$\Ps$ & $\Ps \psi(x) = \int_S \abs{ax}^s \psi(a \cdot x) \,\mu(da)$ \\
$\Pst$ & $\Ps \psi(x) = \int_S \abs{a^*x}^s \psi(a^* \cdot x) \,\mu(da)$ \\
$\pi^s$ & $\pi^s(dx) = (\nus(\es)) \es(x) \nus(dx)$, invariant prob. measure of $Q^s$.\\

$q_n^s(x,a)$ & $q_n^s(x,a) = \frac{\abs{ax}^s}{\kappa(s)^n} \frac{\es(a \cdot x)}{\es(x)}$\\
$\Qxs$ & projective limit of the system $q_n^s(x,\cdot) \mu^{\otimes n}$ on $\Omega=S^{\N}$\\
$\Q^s$ & $\int \Qxs\, \pi^s(dx)$\\
$Q^s$ & Markov operator on $C_1$, $Q^s \phi(x) = (\kappa(s) \es(x))^{-1} \Ps(\phi \es)(x)$ \\

$\bar s$ & $\bar s = \min\{s,1\}$\\
$S$ & $S= \{ a \in {\rm End}(V) \, ; \, aC_+ \subset C_+, \, a^*C_+^* \subset C_+^* \}$\\
$S^0$ & $S^0=\{a \in S \, : \, aC_+ \subset C^0 \}$\\
$S_n$ & $S_n = A_n \ldots A_1$\\

$T$ & smoothing transform, maps a law $\rho$ to the law of $\sum_{i=1}^N A_i Z_i$ where $(A_i), (Z_i), N$ are independent, $(A_i)$ are i.i.d. with law $\mu$ and $(Z_i)$ are i.i.d. with law $\rho$\\
$\tau(x)$ & $\tau(x) = \inf\{ \norm{a}^{-1} \abs{ax} \, : \, a \in S\}$; strictly positive for $x \in C^0$. \\

$v_a$ & dominant eigenvector of the matrix $a \in S^0$ (Perron-Frobenius eigenvector) \\
$v, v^*$ & dominant eigenvectors of $m= \Erw{A}$ resp. $m^*= \Erw{A^*}$ \\

$Y_n$ & Mandelbrot's cascade $Y_n = \sum_{\abs{\gamma}=n} L(\gamma) v$, with $\Erw{N}\, \Erw{A}\, v = v$.
\end{tabularx}

\bibliographystyle{alpha}
\newcommand{\etalchar}[1]{$^{#1}$}

\end{document}